\addspace\texttt{\mkbibbrackets{\thefield{arxivclass}}}}}}
\addspace\texttt{\mkbibbrackets{\thefield{arxivclass}}}}}}
\renewcommand\tableofcontents{%
	\@starttoc{toc}%
	
}
\newcounter{noteCounter}
\newcommand\shorttitle{Solutions and singularities of flows of $\G2$-structures} 
\newcommand\authors{S. Dwivedi \& R. Singhal} 
\newcounter{commentCounter}
	\ifodd\value{page}
\authors
\shorttitle
\newcommand*{\rom}[1]{\expandafter\@slowromancap\romannumeral #1@}
\newtheorem{theorem}{Theorem}[section]
\newtheorem{lemma}[theorem]{Lemma}
\newtheorem{proposition}[theorem]{Proposition}
\theoremstyle{definition}
\newtheorem{definition}[theorem]{Definition}
\newtheorem{remark}[theorem]{Remark}
\numberwithin{equation}{section}
\def\bR{\mathbb R}
\def\pt{\partial}
\def\del{\nabla}
\def\G2{\mathrm{G}_2}
\def\g2{\varphi}
\def\S7{\mathrm{Spin}(7)}
\def\s7{\Phi}
\def\ddt{\frac{d}{dt}}
\def\ptt{\frac{\partial}{\partial t}}
\def\oep{\overline{\Upsilon}}
\def\cD{\mathcal{D}}
\def\cF{\mathcal{F}}
\def\cL{\mathcal{L}}
\def\cT{\mathcal{T}}
\newcommand{\Vop}{\mathsf V}
\def\Spin7{\mathrm{Spin(7)}}
\def\Ric{\mathrm{Ric}}
\def\Riem{\mathrm{Rm}}
\def\Rm{\mathrm{Rm}}
\def\dots7{\Dot{\Phi}}
\DeclareMathOperator\Div{div}
\DeclareMathOperator\vol{vol}
\DeclareMathOperator\Vol{Vol}
\DeclareMathOperator\tr{tr}
\newcommand\xqed[1]{%
	\leavevmode\unskip\penalty9999 \hbox{}\nobreak\hfill
	\quad\hbox{#1}}
\newcommand\demo{\xqed{$\blacktriangle$}}
\newcommand{\qandq}{\quad\text{and}\quad}
\def\blfootnote{\xdef\@thefnmark{}\@footnotetext}
\begin{document}

	\title{Solutions and singularities of the Ricci-harmonic flow and Ricci-like flows of $\mathrm{G_2}$-structures}
	\author{Shubham Dwivedi \& Ragini Singhal}
	\date{\today}
	
	\maketitle

	\begin{abstract}
		We find explicit solutions and singularities of the Ricci-harmonic flow of $\G2$-structures, the Ricci-like flows of $\G2$-structures studied in \cite{panos-georgeG2Hilbert} and of the negative gradient flow of an energy functional of $\G2$-structures, on $7$-dimensional contact Calabi-Yau manifolds and the $7$-dimensional Heisenberg group. We prove that the natural co-closed $\G2$-structure on a contact Calabi-Yau manifold as the initial condition leads to an ancient solution of the Ricci-harmonic flow with a finite time Type I singularity, and it gives an immortal solution to the Ricci-like flows with an infinite time singularity which are Type III if the transversal Calabi-Yau distribution is flat, and Type IIb otherwise. The same ansatz give ancient solution to the negative gradient flow of $\G2$-structures. These are the first examples of Type I singularities of the Ricci-harmonic flow and Type IIb and Type III singularities of the Ricci-like flows. We also obtain similar solutions for all the three flows on the $7$-dimensional Heisenberg group. 
	\end{abstract}
	
	\begin{adjustwidth}{0.95cm}{0.95cm}
		\tableofcontents
	\end{adjustwidth}

\blfootnote{\emph{MSC (2020): 53E99, 53C29, 53C21, 53C15.}}

	\section{Introduction}\label{sec:intro}

	The purpose of this paper is to study explicit solutions and their behaviours for some general flows of $\G2$-structures. Ever since the introduction of the Ricci flow to study deformation of Riemannian metrics by Hamilton \cite{hamilton-3manifolds}, geometric flows have proven to be a powerful analytical tool to study various geometric and topological problems. There have been various proposals of geometric flows in $\G2$-geometry and one of their far-reaching goals is that they would provide a method to search for metrics with holonomy $\G2$ and as a result, Ricci-flat metrics on a spin and oriented $7$-manifold $M$. This corresponds to a varying family $\g2(t)$ of $\G2(t)$-structures, given by non-degenerate $3$-forms $\varphi(t)$ on $M$, with the hope that that it becomes torsion-free in appropriate limits. There have been different such flows in $\G2$-geometry starting with the Laplacian flow of closed $\G2$-structures by Bryant \cite{bryant-remarks}. See \cite{bryant-xu, lotay-wei-gafa, kmt, grigorian-modified, weiss-witt, grigorian-isometric, dgk-isometric, loubeau-saearp, dgk-flows, panos-georgeG2Hilbert, dwivedi-rhf, fino-fowdar} for other flows of $\G2$-structures. 
	
	\medskip
	
	Recall that if $M$ is a $7$-dimensional smooth, oriented and spin manifold then a $\G2$-structure $\varphi$ on $M$ is a $3$-form which is nondegenerate in the sense that it  determines a  metric $g_{\varphi}$ and an orientation $\vol_{\varphi}$. 
	The $\G2$-structure is called \emph{torsion-free} if $\nabla^{g_\varphi}\varphi=0$, where $\nabla^{g_{\varphi}}$ is the Levi-Civita connection of $g_{\varphi}$. Since $g$ is obtained from $\g2$ in a nonlinear way, the equation for $\g2$ being torsion-free is a nonlinear PDE and as a result, finding torsion-free $\G2$-structures with the metrics having full holonomy $\G2$ is a challenging problem. The problem becomes considerably more difficult when $M$ is required to be compact. 
	
	\medskip
	
	The flow of $\G2$-structures we study in this paper is the Ricci-harmonic flow which was introduced by the first author in \cite{dwivedi-rhf}. Let $\g2(t)$ be a family of $\G2$-structures on a compact manifold $M^7$ with some $\G2$-structure $\g2_0$. If $T$ denotes the torsion of the $\G2$-structure $\g2$, $\Ric$ is the Ricci curvature of the underlying metric, $\psi=*_{\g2}\g2$ and $\diamond$ is the operation described in \eqref{eq:diadefn}, then Ricci-harmonic flow is the initial value problem
	\begin{align} 
		\label{eq:rhfeqn} 
		\left\{\begin{array}{rl} 
			& \dfrac{\pt \g2}{\pt t} = \left(-\Ric+3 T^tT -|T|^2g \right) \diamond \g2 + \Div T\lrcorner \psi, \\
			& \g2(0) =\g2_0.
			\tag{RHF}
		\end{array}\right. 
	\end{align}
	Here, $\Div T$ is the divergence of the torsion which is a $1$-form (or a vector field) and is explicitly given in coordinates as $(\Div T)_{i}=\del^{j}T_{ji}$. The flow in \eqref{eq:rhfeqn} is a general flow of $\G2$-structures in the sense that we do not put any conditions on either the initial or evolving $\G2$-structures. This flow was obtained by looking at the Taylor series expansion of a $\G2$-structure $\g2$ in normal coordinates and then computing the "Laplacian of the components of $\g2$" by looking at the second order terms in the expansion. This is a procedure similar to obtaining the Ricci flow of metrics. As such, \eqref{eq:rhfeqn} can be viewed as the heat flow of $\G2$-structures. It is also a natural coupling of the Ricci flow of $\G2$-structures and the \emph{isometric/harmonic} flow of $\G2$-structures studied in \cite{grigorian-isometric, dgk-isometric, loubeau-saearp}. It was shown in \cite[\textsection 3]{dwivedi-rhf} that on compact manifolds, the stationary points of \eqref{eq:rhfeqn} are precisely torsion-free $\G2$-structures. The Ricci-harmonic flow was studied in detail by the first author in \cite{dwivedi-rhf}. It was proved that a unique solution to \eqref{eq:rhfeqn} exists for a short time on a compact manifold starting with any arbitrary $\G2$-structure. Among other results, the following theorem was proved in \cite[Thm. 5.1]{dwivedi-rhf} concerning the maximal existence time of the Ricci-harmonic flow on closed $7$-manifolds.
	
	\begin{theorem}\label{thm:rhflte}
		Let $\g2(t)$ be a solution of the Ricci-harmonic flow on a closed $7$-manifold $M$ on a maximal time interval $[0, \tau)$. We define the following quantity along the flow.
		\begin{align}\label{eq:Lambdadefn}
			\Lambda(t)= \underset{M}{\text{sup}}\  \Lambda(x,t)\ \ \text{where}\ \ \Lambda(x,t)= \left(|\Riem(x,t)|^2+|\del T(x,t)|^2+|T(x,t)|^4\right)^{\frac 12}.
		\end{align}
		Then
		\begin{align}\label{eq:lte1}
			\lim_{t\nearrow \tau} \Lambda(t)=\infty. 
		\end{align}	
		Moreover, the quantity $\Lambda(t)$ blows-up at the following rate,
		\begin{align}\label{eq:Lambdablowuprate}
			\Lambda(t)\geq \dfrac{C}{\tau-t},
		\end{align}
		where $C>0$ is a constant.
	\end{theorem}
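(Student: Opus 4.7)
The strategy is the standard doubling-time/blow-up argument familiar from the Ricci flow and related geometric flows. The first task is \eqref{eq:lte1}, which I would approach by contradiction: suppose $\Lambda(t)\leq K$ on $[0,\tau)$. A bound on $|\Riem|$ and $|T|$ along \eqref{eq:rhfeqn} controls $\pt_t g$ pointwise, so the metrics $g(t)$ are uniformly equivalent to $g(0)$ on $[0,\tau)$ and the 3-forms $\g2(t)$ remain in a compact subset of the space of $\G2$-structures. To upgrade this to smooth convergence as $t\to\tau$, uniform bounds on $\del^k\Riem$ and $\del^k T$ for every $k\geq 0$ are needed; these follow from inductive Bernstein-Bando-Shi type interior estimates adapted to the coupled parabolic system \eqref{eq:rhfeqn}. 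Smooth subsequential convergence $\g2(t)\to\g2(\tau)$, combined with the short-time existence theorem from \cite{dwivedi-rhf} applied with initial data $\g2(\tau)$, would then extend the flow past $\tau$, contradicting the maximality of $\tau$.

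For \eqref{eq:Lambdablowuprate}, I would derive parabolic evolution equations for each of the three constituents of $\Lambda^2$, namely $|\Riem|^2$, $|\del T|^2$ and $|T|^4$, along \eqref{eq:rhfeqn}. Schematically each satisfies an inequality of the form
\begin{align*}
	\frac{\pt}{\pt t}|\Riem|^2 \leq \Delta|\Riem|^2 - 2|\del\Riem|^2 + C\Lambda|\Riem|^2 + \text{(cross terms)},
\end{align*}
with analogous heat-type inequalities for $|\del T|^2$ and $|T|^4$, the cross terms being controlled by $\Lambda$ times one of the three constituents. Applying the maximum principle to $\Lambda_{\max}(t):=\sup_M\Lambda(\cdot,t)$ and discarding the dissipative terms produces the pointwise differential inequality
\begin{align*}
	\frac{d}{dt}\Lambda_{\max}(t) \leq C\Lambda_{\max}(t)^2.
\end{align*}
Integrating $d(\Lambda_{\max}^{-1})/dt \geq -C$ from $t$ to $\tau$ and using \eqref{eq:lte1} then yields \eqref{eq:Lambdablowuprate}.

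The main obstacle is the calculation and control of the coupling terms in the evolution of $|T|^4$ and $|\del T|^2$. Because the right-hand side of \eqref{eq:rhfeqn} contains $-\Ric+3T^tT-|T|^2 g$ acting on $\g2$ together with the $\Div T\lrcorner\psi$ piece, differentiating the induced evolution of $T$ produces terms mixing curvature and second covariant derivatives of $T$ in an intertwined way that is not present in either the pure Ricci flow of metrics or the isometric flow of $\G2$-structures considered separately. Identifying the precise signs and leading-order structure of the resulting commutator and divergence contributions, so that the dissipative $-2|\del\Riem|^2$ and $-2|\del^2 T|^2$ terms absorb the cross couplings at the top order, is the delicate point; once this is established, the BBS-type estimates and the final ODI argument proceed along standard lines.
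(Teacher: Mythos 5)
Note first that this paper does not prove \Cref{thm:rhflte} at all: it is quoted from \cite[Thm. 5.1]{dwivedi-rhf}, so there is no internal proof to compare against. Your outline is nonetheless correct and is essentially the standard argument that the cited reference (following Lotay--Wei's treatment of the Laplacian flow) uses --- Shi-type derivative estimates, uniform metric equivalence, smooth convergence and restarting via short-time existence for \eqref{eq:lte1}, and a doubling-time inequality $\frac{d}{dt}\Lambda_{\max}\leq C\Lambda_{\max}^{2}$ (interpreted via Hamilton's trick for the Lipschitz function $\Lambda_{\max}$) integrated up to $\tau$ for \eqref{eq:Lambdablowuprate}.
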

	
	Thus, the Ricci-harmonic flow exists on a closed manifold as long as $\Lambda(t)$ remains bounded. Using the blow-up rate on $\Lambda$ in \eqref{eq:Lambdablowuprate}, the type of singularities of the Ricci-harmonic flow can be classified. We recall the following definition from \cite[Defn. 5.3]{dwivedi-rhf}.
	
	\begin{definition}\label{def:sing.types}
		Suppose that $(M^7,\varphi(t))$ is a solution of the Ricci-harmonic flow of $\G2$-structures on a closed manifold on a maximal time interval $[0,\tau)$ and let $\Lambda(t)$ be as in \eqref{eq:Lambdadefn}.
		
		\medskip
		
		\noindent	
		If we have a finite-time singularity, i.e.~$\tau<\infty$, we say that the solution forms  
		\begin{enumerate}
			\item a \emph{Type I singularity} (rapidly forming) if $\sup_{t\in[0,\tau)}(\tau-t)\Lambda(t) <\infty$; and otherwise \label{itemTypeI}
			\item a \emph{Type IIa singularity} (slowly forming) if $\sup_{t\in [0,\tau)}(\tau-t)\Lambda(t)=\infty$.
		\end{enumerate}
		
		\noindent	
		If we have an \emph{infinite-time} singularity, where $\tau=\infty$, then it is 
		\begin{enumerate}
			\item a \emph{Type IIb singularity} (slowly forming) if $\sup_{t\in  [0,\infty)}t\Lambda(t)=\infty$; and otherwise  
			
			\item a \emph{Type III singularity} (rapidly forming) if $\sup_{t\in  [0,\infty)}t\Lambda(t)<\infty$.
		\end{enumerate}
	\end{definition}
	
	As with any geometric flow, its important to have solutions of the flow as well as examples of singularities of any of the type described in \Cref{def:sing.types}. It was shown in \cite[Prop. 3.13]{dwivedi-rhf} that \emph{nearly} $\G2$-\emph{manifolds} are shrinking solutions of the Ricci-harmonic flow. In this paper, we show the existence of solutions of the Ricci-harmonic flow on two classes of manifolds with $\G2$-structures. The first class are $7$-dimensional \emph{contact Calabi-Yau} manifolds and on these, we prove the existence of solutions which are 1) \emph{ancient solutions}, i.e., they exist for all negative times but not \emph{eternal} and, 2) they form Type I singularity described in \cref{itemTypeI}. These are the first examples of such singularities of the Ricci-harmonic flow on closed manifolds and also give new examples of the flow itself. The contact Calabi-Yau manifolds we consider are given by the data $(M^{7},g,\eta, \Upsilon)$, where $(M^7,g)$ is a Sasakian $7$-manifold with Riemannian metric $g$, contact form $\eta$ and transverse K\"ahler form $\omega=d\eta\in\Omega^{1,1}(M)$, and $\Upsilon\in\Omega^{3,0}(M)$ is a  transverse holomorphic volume form; here $(p,q)$ denotes basic bi-degree with respect to the horizontal distribution $\cD=\ker\eta$, see \Cref{subsec:cCYmanifolds}. The second class of manifolds where we prove the existence of explicit solutions to the Ricci-harmonic flow are on the $7$-dimensional Heisenberg group where we show that the solutions are again ancient and have Type I singularities. 
	
	\medskip
	
	The motivation for studying the flows of $\G2$-structures on contact Calabi-Yau manifolds and the $7$-dimensional Heisenberg group stems from the works \cite{lotay-saearp-saavedra} by Lotay--Sá Earp--Saavedra and \cite{bff-coflow} by Bagaglini--Fernández--Fino, respectively. In \cite{lotay-saearp-saavedra}, an explicit ansatz of natural $\G2$-structures on contact Calabi-Yau $7$-manifolds was provided which gave new solutions and singularities for the Laplacian co-flow of co-closed $\G2$-structures and the Laplacian flow of $\G2$-structures. In \cite{bff-coflow}, a similar analysis was done for the Laplacian co-flow and the modified Laplacian co-flow of co-closed $\G2$-structures on the $7$-dimensional Heisenberg group. We use the same ansatz as in \cite{lotay-saearp-saavedra} and \cite{bff-coflow} for our analysis of the Ricci-harmonic flow and Ricci-like flows of $\G2$-structures considered in \cite{panos-georgeG2Hilbert} (see \eqref{eq:pgflw1} for more details). We mention that $\G2$-structures on contact Calabi-Yau $7$-manifolds were also analyzed to study solutions of Grigorian's modified Laplacian co-flow of co-closed $\G2$-structures as well as the dimensional reductions of the Laplacian flow and co-flow considered in \cite{picard-suan} by Sá Earp--Saavedra--Suan in \cite{saearp-saavedra-suan} and solutions to the heterotic $\G2$-system by Lotay--Sá Earp \cite{lotay-saearp}. 
	
	\medskip

	Other geometric flow of $\G2$-structures which we consider in this paper are the flows considered by Gianniotis--Zacharopoulos in \cite{panos-georgeG2Hilbert}. The idea in \cite{panos-georgeG2Hilbert} is to consider a modified $\G2$-Einstein-Hilbert functional on the space $\Omega^3_+(M)$ of $\G2$-structures on $M$ and look at its variation. The $\G2$-Einstein-Hilbert functional is given by 
	\begin{align}\label{eq:G2EinsteinHilbert}
		\cF(\g2)= \int_M \left(\frac 16 R - \frac 13|T|^2 -\frac 16 (\tr T)^2 \right)\vol_{\g2},
	\end{align}
	where $R$ is the scalar curvature, $T$ is the torsion $2$-tensor of $\g2$ and $\tr T$ is the metric trace of the torsion. All the norms and trace are take with respect to the metric induced by $\g2$. The flows of $\G2$-structures obtained by the variation of $\cF$ are \cite[eqs. (4.20) and (4.21)]{panos-georgeG2Hilbert}
	\begin{align}
		\ptt \g2 &= \left(-\Ric -\frac 23 (T\circ (\Vop{T}\lrcorner \g2))_{\text{sym}} + \tr TT_{\text{sym}} \right) \diamond \g2 + \left(\Div T +\frac 13\tr T \Vop{T} - \frac 13 T^t(\Vop{T}) \right)\lrcorner \psi, \label{eq:pgflw1} \\
		\ptt \g2 &= \left(-\Ric -\frac 23 (T\circ (\Vop{T}\lrcorner \g2))_{\text{sym}} + \tr TT_{\text{sym}} + \frac 13 \left(|T|^2-\frac 13 |\Vop{T}|^2 \right)g \right) \diamond \g2 + \left(\Div T +\frac 13\tr T \Vop{T} - \frac 13 T^t(\Vop{T}) \right)\lrcorner \psi. \label{eq:pgflw2}
	\end{align}
	Here $\Vop{T}$ is the $7$-dimensional part of the torsion which can be identified with a vector field on $M$, $T_{\text{sym}}$ denotes the symmetric part of the torsion and $T^t$ is the transpose of the $2$-tensor $T$. The flow in \eqref{eq:pgflw1} is obtained by lookng at the variation of the $\G2$-Einstein-Hilbert functional and the flow in \eqref{eq:pgflw2} is obtained by looking at a perturbed version of the integrand in the variation (see \cite{panos-georgeG2Hilbert} for more details).

	We find explicit solutions for the flows \eqref{eq:pgflw1}-\eqref{eq:pgflw2} on contact Calabi-Yau $7$-manifolds which are \emph{immortal} solutions, i.e., they exist for all positive times. These are first such examples of this kind for the flow. At the moment, we are lacking a result like \Cref{thm:rhflte} for the flow \eqref{eq:pgflw1}, but given their similarity with the Ricci-harmonic flow \eqref{eq:rhfeqn}, it is expected that a quantity similar to $\Lambda(t)$ will govern the maximal existence time of the flows of variation of the $\G2$-Einstein-Hilbert functional and so our solutions might provide the first examples of Type IIb and Type III infinite time singularities of the flow. We also obtain solutions to these flows on the $7$-dimensional Heisenberg group. 
	
	\medskip
	
	The final geometric flow of $\G2$-structures which we consider in the paper is the negative gradient flow of the natural energy functional $\g2\mapsto \frac 12\int |T|^2\vol$ on the space of all $\G2$-structures\footnote{We thank Panagiotis Gianniotis for suggesting to analyze this flow as well.}. This is essentially the heat flow of $\G2$-structures studied by Weiss--Witt in \cite{weiss-witt} and was also studied as a special case in \cite{dgk-flows}. The flow is explicitly given by (see \Cref{subsec:ngf} for more details)
	\begin{align*}
\ptt \g2 = \left(-\Ric -\frac 12\cL_{\Vop{T}}g - \frac 12|T|^2g+(\tr T)T_{\text{sym}}-T^2_{\text{sym}}+TT^t-(T(PT))_{\text{sym}}\right) + \Div T\lrcorner \psi.
	\end{align*}
We again find explicit solutions for the flow on contact Calabi-Yau manifolds which are \emph{ancient solutions}. These are first such examples of this kind for the negative gradient flow. We also obtain solutions of these flows on the $7$-dimensional Heisenberg group.	
	
	\medskip
	
	We emphasize that since the Ricci-harmonic flow \eqref{eq:rhfeqn}, the Ricci-like flows in \eqref{eq:pgflw1}-\eqref{eq:pgflw2} and the negative gradient flow of the energy functional are general flows of $\G2$-structures, i.e., no \emph{a priori}, conditions on either the initial or evolving $\G2$-structures are needed for the flows, it does not really matter whether the ansatz considered has special form and this is a fact which is different from the considerations in \cite{lotay-saearp-saavedra}, \cite{bff-coflow} and \cite{saearp-saavedra-suan}. Of course, the ansatz we consider in \eqref{eq:phit}, \eqref{eq:psit} and \eqref{eqn:phit_h7} are always co-closed $\G2$-structures so the analysis of the flows become easier. However, we expect that more complicated ansatz (not necessarily on contact Calabi-Yau manifolds) might also provide non-trivial solutions and singularities for the flows considered in this paper. A particularly interesting problem would be to understand the dimensional reductions of the flows considered in this paper to $4$ and $6$-dimensional manifolds with $\mathrm{SU}(2)$ and $\mathrm{SU}(3)$-structures, respectively. 
	
	\medskip
	
	The outline of the paper is as follows. We discuss some preliminaries on $\G2$-structures, in general, in \Cref{subsec:G2geometry} and on contact Calabi-Yau manifolds in \Cref{subsec:cCYmanifolds}. We describe our families of $\G2$-structures which will be the solutions to \eqref{eq:rhfeqn}, \eqref{eq:pgflw1} and \eqref{eq:neggradflow} in \Cref{sec:ansatz} where we also compute the Ricci tensor, torsion tensor and other quantities required to analyze the flows. The main results of the paper are in \Cref{sec:mainresults} and \Cref{sec:solonheisenberg}. These, in the contact Calabi-Yau case are, \Cref{thm:RHFcCY} where we find explicit solutions and Type I singularities of \eqref{eq:rhfeqn}, \Cref{thm:pgflowcCY} where we find solutions of \eqref{eq:pgflw1} and \eqref{eq:pgflw2} and \Cref{thm:ngflowcCY} where we do the same for the negative gradient flow. On the Heisenberg group, the general ordinary differential equations for the functions in the ansatz in \eqref{eqn:phit_h7} which provide solutions to the Ricci-harmonic flow are obtained in \Cref{thm:RHFheisenberggeneral} and explicit solutions are described in \Cref{thm:RHFheisenberg}. Similarly, for the Ricci-like flows, the general ODEs are described in equations \eqref{eq:pgflw1heigen}, \eqref{eq:pgflw2heigen} for the Ricci-like flows in \cite{panos-georgeG2Hilbert} and in \eqref{eq:ngflw1heigen} for the negative gradient flow. 
	
	\ack{A part of this work was done during the BIRS-CMI workshop "26w5603: Connections among Spin Geometry, Minimal Surfaces and Relativity" and both the authors would like to thank the BIRS for the opportunity, the Chennai Mathematical Institute for providing stimulating working conditions and the participants of the workshop for excellent scientific discussions. The authors would like to thank Panagiotis Gianniotis, Jason Lotay  and Simon Salamon for useful discussions. The first author acknowledges support by the Deutsche Forschungsgemeinschaft (DFG, German Research Foundation) under Germany’s Excellence Strategy – EXC 2121 "Quantum Universe" – 390833306. The second author is funded by the Deutsche Forschungsgemeinschaft (DFG, German Research Foundation) under Germany’s Excellence Strategy EXC 2044 –390685587, Mathematics Münster: Dynamics–Geometry–Structure.}

	\section{$\G2$-geometry and contact Calabi-Yau manifolds}\label{sec:G2andcCY}
	
	\subsection{$\G2$-geometry}\label{subsec:G2geometry}
	Let $M^7$ be an oriented smooth manifold. A $\G2$-structure on $M$ is the reduction of the structure group of the frame bundle $Fr(M)$ from the Lie group $\mathrm{GL}(7, \bR)$ to the Lie group $\G2$. Equivalently, a $\G2$-structure is given by a $3$-form $\g2\in \Omega^3(M)$ which is nondegenerate, which means that it determines a Riemannian metric $g_\g2$ and an orientation which are given by the formula
	\begin{align}\label{eq:metricfromphi}
		(X\lrcorner \g2)\wedge (Y\lrcorner \g2)\wedge \g2 = 6 g_{\g2}(X,Y)\vol_{\g2},\ \ \ X,\ Y\in \Gamma(TM).
	\end{align}
	The metric and orientation determines the Hodge star operator and we denote by $\psi=*_{\g2}\g2$. The space of differential forms decompose further into irreducible $\G2$-representation. We have the following orthogonal decomposition with respect to $g$,
	\begin{align*}
		\Omega^2=\Omega^2_7\oplus \Omega^2_{14}\ \ \ \ \ \text{and}\ \ \ \ \ \Omega^3=\Omega^3_1\oplus \Omega^3_7\oplus \Omega^3_{27},
	\end{align*}
	where $\Omega^k_l$ has pointwise dimension $l$. Let $\gamma \in \Omega^k$. Given any $2$-tensor $A$ on $M$, we define
	\begin{align}\label{eq:diadefn}
		(A\diamond \gamma)_{i_1i_2\cdots i_k}=A_{i_1}^{\, \, p}\gamma_{pi_2\cdots i_k}+A_{i_2}^{\, \, p}\gamma_{i_1pi_3\cdots i_k}+\cdots + A_{i_k}^{\, \, p}\gamma_{i_1i_2\cdots i_{k-1}p},	
	\end{align}	
	where we are raising the indices using the underlying metric. So, for instance, $g\diamond \gamma = k\gamma$. The operation $\diamond$ is the infinitesimal action of the group $\mathrm{GL}(7, \bR)$ on the space of differential forms. This operation is the same as the map $i_{\g2}$ in \cite{bryant-remarks} (up to a factor of $\frac 12$) and we will use them interchangeably. If $\cT^2$ denote the space of $2$-tensors, then we get a linear map 
	\begin{align*}
		\diamond : \cT^2 \rightarrow \Omega^3.
	\end{align*}
	
	Recall that $\cT^2= C^{\infty}(M)g \oplus S^2_0 \oplus \Omega^2_7 \oplus \Omega^2_{14}$, where $S^2_0$ denote the space of symmetric traceless $2$-tensors. It's easy to prove that $\ker(\diamond) \cong \Omega^2_{14}$ and we have isomorphisms
	\begin{align*}
		C^{\infty}(M)\cong \Omega^3_1,\ \ \ \ \ \Omega^2_7 \cong \Omega^3_7 \cong  \Omega^1,\ \ \ \ \  \ S^2_0\cong \Omega^3_{27}.
	\end{align*}
	As a result, any $3$-form $\sigma\in \Omega^3(M)$ can be described by a pair $(h,X)$ where $h$ is a symmetric $2$-tensor and $X$ a vector field on $M$. We have
	\begin{align*}
		\sigma = h\diamond \g2+ X\lrcorner \psi.
	\end{align*}
	
	
	The torsion of a $\G2$-structure is a $2$-tensor $T$ and is given by
	\begin{align}\label{eq:delphi}
		\del_m\g2_{ijk}= T_{m}^{\ p}\psi_{pijk}.
	\end{align}	
	Since $T$ is a $2$-tensor, we can decompose it further into \emph{intrinsic torsion forms}. These intrinsic torsion forms can be described by looking at the expressions of $d\g2$ and $d\psi$ (see \cite{bryant-remarks} or \cite{flows1} for more details),
	\begin{align}
		\label{eq:dphi_int} d\g2 &= \tau_0\psi+ 3\tau_1\wedge \g2 + *\tau_3, \\
		\label{eq:dpsi_int} d\psi&= 4\tau_1\wedge \psi +  \tau_2\wedge \g2,
	\end{align}
	where $\tau_0\in C^{\infty}(M),\  \tau_1\in \Omega^2_7,\ \tau \in \Omega^2_{14}$ and $\tau_3\in \Omega^3_{27}$. We have an isomorphism between the space $S^2$ of traceless symmetric $2$-tensors and $\Omega^3_{1\oplus 27}$ which is given by a $\G2$-specific map $j_{\g2}:\Omega^3\rightarrow S^2$ which was first described by Bryant \cite{bryant-remarks} and is given by
	\begin{align}\label{eq:jmap}
		j_{\g2}(\sigma) (X,Y) = *\left((X\lrcorner \g2)\wedge (Y\lrcorner \g2)\wedge \sigma\right),
	\end{align}
	where $X, Y\in \Gamma(TM)$. We used $j$ to view $\tau_3$ as either a traceless symmetric $2$-tensor or an element of $\Omega^3_{27}$ depending on the context. The decomposition of $T$ in terms of the intrinsic torsion forms is given by 
	\begin{align}\label{eq:inttorsions}
		T_{ij}=\dfrac{\tau_0}{4}g_{ij}-\frac 14 j_{\g2}(\tau_3)_{ij}-(\tau_1 \lrcorner \g2)_{ij}-\frac 12({\tau_{2}}_{ij}).
	\end{align}
	We also have the isomorphism $\Omega^1(M)\cong \Omega^2_7$ so we can view $\tau_1$ as a $1$-form or a $2$-form in $\Omega^2_7$ and explicitly $(\tau_1)_{ij}=(\tau_1)^{l}\g2_{lij}$. We also denote the $7$dimensional component of $T$ by $\Vop{T}$ with $(\Vop{T})_k=T^{ij}\g2_{ijk}$.
	
	By the expression of $d\psi$ above, a $\G2$-structure is co-closed if and only if $\tau_1=0$ and $\tau_2=0$.  Hence, the full torsion tensor of a co-closed $\G2$-structure is a symmetric $2$-tensor
	\begin{equation}
		\label{eq:torsion.coclosed}
		T=\frac{\tau_0}{4}g -\frac{1}{4}j_{\varphi}(\tau_3) 
		\in S^2.    
	\end{equation}
	
	\medskip
	
	The covariant derivative of the torsion of $\g2$ and the Riemann curvature tensor of the underlying metric are both second order in $\g2$ and they are related and we can write the expression of the Ricci tensor of the metric $g_{\g2}$ in terms of the intrinsic torsions. Since we will need to compute the Ricci tensor of our families of $\G2$-structures for looking at the flows \eqref{eq:rhfeqn} and \eqref{eq:pgflw1}, we describe Bryant's method of computing the Ricci tensor below, see \cite[eq. (4.30)]{bryant-remarks}. Define a $\G2$-equivariant pairing $Q:\Omega^3\times \Omega^3\rightarrow \Omega^3$ as follows. Let $\alpha,\ \beta \in \Omega^3$ and define
	\begin{align}\label{eq:Qmap}
		Q(\alpha, \beta) = *_{\g2}\left( (*\g2)_{ijkl}(e_j\lrcorner e_i \lrcorner *_{\g2}\alpha)\wedge (e_l\lrcorner e_k\lrcorner *\beta)\right).
	\end{align}
	Using this, we can write the Ricci tensor of the $g_{\g2}$ as
	\begin{align}\label{eq:Riccitensor}
		\Ric &= -\left(\frac 32 d^*\tau_1-\frac 38 \tau_0^2 + 15 |\tau_1|^2-\frac 14 |\tau_2|^2 + \frac 12|\tau_3|^2 \right) g \nonumber \\
		& \quad + j_{\g2}\left(-\frac 54 d(*(\tau_1\wedge \psi))-\frac 14 d\tau_2 + \frac 14*d\tau_3+\frac 52 \tau_1\wedge * (\tau_1\wedge \psi)-\frac 18\tau_0\tau_3 + \frac 14 \tau_1\wedge \tau_2 + \frac 34 *(\tau_1\wedge \tau_3)  \right. \nonumber \\
		& \qquad \qquad \left. + \frac 18 *(\tau_2\wedge \tau_2) + \frac{1}{64}Q(\tau_3, \tau_3)\right).
	\end{align}
	The formula in \eqref{eq:Riccitensor}, although looks pretty complicated, but is very useful. In particular, we will only need to use it for co-closed $\G2$-structures and hence all the terms with $\tau_1$ and $\tau_2$ vanish and we get the simpler formula
	\begin{align}\label{eq:Riccicoclosed}
		\Ric = -\left(-\frac 38 \tau_0^2  + \frac 12|\tau_3|^2 \right) g + j_{\g2}\left(  \frac 14*d\tau_3-\frac 18\tau_0\tau_3  + \frac{1}{64}Q(\tau_3, \tau_3)\right).
	\end{align}
	
	\subsection{Contact Calabi-Yau manifolds}\label{subsec:cCYmanifolds}
	We now discuss $7$-dimensional contact Calabi-Yau manifolds (cCY manifolds, henceforth). The notion of a contact Calabi-Yau manifolds as odd-dimensional manifolds having transverse Calabi-Yau geometry was originally considered by Tomassini--Vezzoni \cite{tomassini-vezzoni} and Habib--Vezzoni \cite{habib-vezzoni}. We use their approach as well as the description in \cite{lotay-saearp-saavedra, lotay-saearp}.

	\begin{definition}
		\label{def:cCY}
		A $7$-dimensional \emph{contact Calabi--Yau} (cCY) manifold is a quadruple   $(M^{7},g,\eta,\Upsilon)$  such that
		\begin{enumerate}
			\item $(M,g)$ is a $7$-dimensional Sasakian manifold with contact form $\eta$;
			
			\item $\Upsilon$ is a  nowhere vanishing closed transversal $(3,0)$-form on the distribution $\cD=\ker\eta$,  with $\omega=d\eta$ and
			
			\begin{align*}
				\Upsilon \wedge \overline{\Upsilon}= \frac{-4i}{3} \omega^3,
				\quad d\Upsilon=0.  
			\end{align*} 
		\end{enumerate}	
		We shall write:
		$$ 
		\mathrm{Re}\Upsilon:= \frac{\Upsilon+\oep}{2},\quad \mathrm{Im}\Upsilon:= \frac{\Upsilon-\oep}{2i}.
		$$  	
	\end{definition}

	We now relate the cCY geometry in 7 dimensions to $\G2$ geometry by stating the following proposition (see~\cite[Corr. 6.8]{habib-vezzoni}, \cite[Prop. 2.1]{lotay-saearp-saavedra} and \cite{lotay-saearp}). 
	\begin{proposition}\label{prop:CcYG2-structure}
		Let $(M^7,g,\eta,\Upsilon)$ be a contact Calabi-Yau 7-manifold. Then $M$ carries a $1$-parameter family of co-closed $\G2$-structures defined by 
		\begin{align}\label{eq:std.phi}
			\g2 = a\eta \wedge \omega + \mathrm{Re} \Upsilon,   
		\end{align} 
		for $a>0$, and with $\omega=d\eta$. The induced metric $g_{\g2}$ equals the metric $g$ for $a=1$. Furthermore, the dual $4$-form is given by
		\begin{align}\label{eq:std.psi}
			\psi= *_{\g2}\g2 = \frac 12 \omega^2 - a\eta \wedge \mathrm{Im} \Upsilon.   
		\end{align}    
	\end{proposition}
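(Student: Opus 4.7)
The strategy is to verify the claim in a local orthonormal coframe adapted to the Sasakian Calabi--Yau structure and then match everything against the standard $\G2$-form on $\bR^7$. At a point of $M$, because $(M,g,\eta)$ is Sasakian with transverse Calabi--Yau data $(\omega,\Upsilon)$ on $\cD=\ker\eta$, one can pick a local orthonormal coframe $(e^1,\dots,e^6)$ for $\cD^*$ in which
\begin{align*}
\omega &= e^{12}+e^{34}+e^{56}, \\
\mathrm{Re}\,\Upsilon &= e^{135}-e^{146}-e^{236}-e^{245}, \qquad
\mathrm{Im}\,\Upsilon = e^{136}+e^{145}+e^{235}-e^{246},
\end{align*}
this being precisely the standard $\SU(3)$ model on $\bR^6\cong\bC^3$; the normalization $\Upsilon\wedge\overline{\Upsilon}=-\tfrac{4i}{3}\omega^3$ is what fixes the coefficient of $\mathrm{Re}\,\Upsilon$ to be $1$ in this trivialization.

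Now introduce the rescaled coframe $\tilde e^i:=e^i$ for $i\le 6$ and $\tilde e^7:=a\eta$. A direct substitution shows that
\begin{align*}
\g2 \;=\; a\,\eta\wedge\omega + \mathrm{Re}\,\Upsilon \;=\; \tilde e^{127}+\tilde e^{347}+\tilde e^{567} + \tilde e^{135}-\tilde e^{146}-\tilde e^{236}-\tilde e^{245},
\end{align*}
which is the standard $\G2$ three-form in the $\tilde e^i$ coframe. Hence $\g2$ is pointwise non-degenerate, so it defines a $\G2$-structure, and formula \eqref{eq:metricfromphi} forces $(\tilde e^1,\dots,\tilde e^7)$ to be the induced $g_\g2$-orthonormal coframe. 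Consequently $g_\g2 = \sum_{i=1}^6 (e^i)^{2} + a^2\,\eta\otimes\eta = g|_{\cD}+a^2\,\eta\otimes\eta$, which equals the Sasakian metric $g=g|_\cD+\eta\otimes\eta$ exactly when $a=1$. The induced orientation coincides with that of $\tilde e^{1\cdots 7}=a\,e^{1\cdots 6}\wedge\eta$.

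For the Hodge dual, the standard identity in the $\tilde e^i$ frame gives $\tilde\psi_0=\tfrac12\omega^2-\tilde e^7\wedge\mathrm{Im}\,\Upsilon$, and substituting back $\tilde e^7=a\eta$ yields \eqref{eq:std.psi}. Finally, to confirm co-closedness one computes
\begin{align*}
d\psi \;=\; \tfrac12\,d(\omega^2) - a\,d\eta\wedge\mathrm{Im}\,\Upsilon + a\,\eta\wedge d(\mathrm{Im}\,\Upsilon).
\end{align*}
Because $\omega=d\eta$ is closed, the first term vanishes; because $\Upsilon$ is closed on $M$ by \Cref{def:cCY}, so is $\mathrm{Im}\,\Upsilon$, killing the third term; and the middle term vanishes since $d\eta\wedge\mathrm{Im}\,\Upsilon=\omega\wedge\mathrm{Im}\,\Upsilon$, which pointwise sits in $\Lambda^{(2,3)}\oplus\Lambda^{(3,2)}$ of the transverse $\cD$, but transversally $\cD\cong\bC^3$ so $\Lambda^{(2,3)}=\Lambda^{(3,2)}=\{0\}$. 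Thus $d\psi=0$, proving $\g2$ is co-closed.

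The essentially routine step is the frame computation; the only mild subtlety is keeping the factor $a$ bookkeeping consistent when checking that \eqref{eq:metricfromphi} produces the claimed metric, which is what forces the specific coefficient $a$ (and not some power $a^k$) to appear in $\psi$. All of the Sasakian and Calabi--Yau hypotheses are used only in the vanishing statements $d\omega=0$, $d\Upsilon=0$, and the bidegree obstruction $\omega\wedge\Upsilon=0$.
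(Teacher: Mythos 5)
The paper does not actually prove this proposition; it cites \cite[Corr.~6.8]{habib-vezzoni} and \cite[Prop.~2.1]{lotay-saearp-saavedra}, and only verifies $d\psi=0$ in the two lines following the statement. Your adapted-coframe argument (put the transverse $\SU(3)$-structure in the standard model on $\bC^3$, set $\tilde e^7=a\eta$, recognize the standard $\G2$-form, read off the metric and the Hodge dual) is the standard route and is essentially what the cited sources do, so the overall structure is sound and correctly tracks the factor of $a$.

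One step is misjustified, though the conclusion it supports is true. You claim $\omega\wedge\mathrm{Im}\Upsilon$ lies in $\Lambda^{(2,3)}\oplus\Lambda^{(3,2)}$ of the transverse distribution and that these spaces vanish on $\bC^3$. Neither assertion is correct: $\Lambda^{(2,3)}(\bC^3)^*$ has dimension $3$, and in any case $\omega$ has type $(1,1)$ while $\Upsilon$ has type $(3,0)$, so $\omega\wedge\Upsilon$ has type $(4,1)$ and $\omega\wedge\overline{\Upsilon}$ has type $(1,4)$; these vanish because the transverse complex dimension is $3$, which is the correct reason that $\omega\wedge\mathrm{Im}\Upsilon=0$. (Equivalently, one checks directly in your coframe that every term of $(e^{12}+e^{34}+e^{56})\wedge\mathrm{Im}\Upsilon$ repeats an index.) With that one-line correction the proof is complete.
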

	
	Since $d \mathrm{Re} \Upsilon =0\ d\omega=0,\ d\mathrm{Im} \Upsilon=0$ in our case and $\omega$ and $\omega\wedge\Upsilon=0$, we see that
	\begin{align*}
		d\g2 = a \omega \wedge \omega,\ \ \ \text{and}\ \ \ d\psi =0.    
	\end{align*} and hence cCY$^{7}$ have co-closed $\G2$-structures.
	
	Since we are interested in flows of $\G2$-structures on cCY manifolds, following \cite{lotay-saearp-saavedra}, we mention the following standard set-up on a cCY manifold $M^7$ which will form the initial conditions for our flows in \Cref{sec:mainresults}. We have the following
	\begin{equation}
		\label{eq: standard setup}
		\tag{S}
		\text{\parbox{.90\textwidth}
			{$\bullet$ $(M^7,g_0,\eta_0,\Upsilon_0)$ a closed contact Calabi-Yau 7-manifold;\\
				$\bullet$ $\cD_0=\ker\eta_0$ its horizontal distribution, and $\omega_0=d\eta_0$ its transverse Kähler form;\\
				$\bullet$ for $a>0$, the $\G2$-structure $(\g2_0, \psi_0)$ are defined by \eqref{eq:std.phi} and \eqref{eq:std.psi} and hence are co-closed $\G2$-structures.
		}}
	\end{equation}
	
	\subsection{Ansatz for the flows}\label{sec:ansatz}
	
	We now consider a $1$-parameter family of $\G2$-structures on contact Calabi-Yau manifold $M^7$ with the initial  $\G2$-structure defined by \eqref{eq:std.phi} and \eqref{eq:std.psi} on a cCY setup \eqref{eq: standard setup}:
	\begin{equation}
		\label{eq: phi0 and psi0} 
		\varphi_0=a\eta_0\wedge\omega_0+{\rm{Re}}\Upsilon_0\qandq
		\psi_0=\frac{1}{2}\omega_0^2-a\eta_0\wedge{\rm{Im}}\Upsilon_0.
	\end{equation}
	
	The family of $\G2$-structures parameterized by $t\in \bR$ is given by
	\begin{equation}
		\label{eq:phit}
		\varphi_t = f_th_t^2\eta_0\wedge \omega_0+h_t^3{\rm{Re}} \Upsilon_0,
	\end{equation}
	where $f_t,h_t$ are functions depending only on time. The initial condition $\varphi_{t}\mid_{t=0}=\varphi_0$ implies 
	\begin{equation}
		\label{eq:init.conds.fh}
		f_0=a
		\qandq h_0=1.
	\end{equation}
	The  warped metric induced by $\varphi_t$ is given by
	\begin{equation}
		\label{eq: metric.t}
		g_t=f_t^2\eta_0^2+h_t^2g_{\cD_0}.
	\end{equation} 
	For the $6$-dimensional distribution $(\cD_0,\omega_0,\Upsilon_0)$,
	\begin{equation*}	
		\vol_{\cD_0}=\frac{1}{3!}\omega_0^3=\frac{i}{8}\Upsilon_0\wedge\oep_0=\frac{1}{4}{\rm{Re}} \Upsilon_0\wedge{\rm{Im}}\Upsilon_0,
	\end{equation*}
	and the volume form with respect to $\varphi_t$ is given by
	\begin{equation}\label{eq: vol.t}
		\vol_t=f_th_t^6\eta_0\wedge\vol_{\cD_0}.
	\end{equation}

	Using the above metric and orientation one can compute that 
	\begin{equation}
		\label{eq:psit}
		\psi_t =*_t\varphi_t=\frac{1}{2}h_t^4\omega_0^2-f_th_t^3\eta_0\wedge{\rm{Im}}\Upsilon_0,
	\end{equation}
	which at $t=0$ is the 4-form in \eqref{eq:std.psi} as expected.

	We now compute the intrinsic torsion forms of the above family of $\G2$-structures that defines the torsion tensor $T_t$ as in \eqref{eq:inttorsions}. This is essentially done in \cite[Lemma 3,1]{lotay-saearp-saavedra} and we are redoing the computations for completeness.
	\begin{lemma}
		For the family $\varphi_t$ in \eqref{eq:phit} the only non-zero intrinsic torsion forms are $(\tau_0)_t$ and $(\tau_3)_t$ that are given by
		\begin{align}
			\label{eq:tau0t} (\tau_0)_t&= \frac{6f_t}{7h_t^2},\\
			\label{eq:tau3t}  (\tau_3)_t&=\frac{8}{7}f_t^2\eta_0\wedge\omega_0-\frac{6}{7}f_th_t{\rm{Re}}\Upsilon_0.	
		\end{align}
	\end{lemma}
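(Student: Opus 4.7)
The plan is to establish the torsion formulas in two stages: first show $\varphi_t$ is co-closed so that $(\tau_1)_t = (\tau_2)_t = 0$, and then extract $(\tau_0)_t$ and $(\tau_3)_t$ from the single identity $d\varphi_t = (\tau_0)_t\psi_t + *_t(\tau_3)_t$ that survives in \eqref{eq:dphi_int}.

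To prove co-closedness, I would differentiate $\psi_t$ directly from \eqref{eq:psit}. Since $\omega_0=d\eta_0$ is exact, $d(\omega_0^2)=0$; and the cCY conditions give $d\,\mathrm{Im}\Upsilon_0=0$ together with $\omega_0\wedge \Upsilon_0=0$ (hence $\omega_0\wedge \mathrm{Im}\Upsilon_0=0$), so that $d(\eta_0\wedge \mathrm{Im}\Upsilon_0)=\omega_0\wedge \mathrm{Im}\Upsilon_0=0$. Thus $d\psi_t=0$, and \eqref{eq:dpsi_int} forces $(\tau_1)_t=(\tau_2)_t=0$ by orthogonality of the $\Omega^2_7$ and $\Omega^2_{14}$ components.

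Next, I would compute $d\varphi_t = f_t h_t^2\, \omega_0\wedge \omega_0$, using $d\omega_0=0$ and $d\,\mathrm{Re}\Upsilon_0=0$. To isolate $(\tau_0)_t$ I would wedge with $\varphi_t$: since $\tau_3\in\Omega^3_{27}$ is pointwise orthogonal to $\varphi\in\Omega^3_1$, the identity $\alpha\wedge \beta=\langle \alpha, *\beta\rangle \vol$ forces $*_t(\tau_3)_t\wedge \varphi_t=0$, while $\psi_t\wedge\varphi_t = 7\,\vol_t$. This gives $(\tau_0)_t=\frac{1}{7}*_t(d\varphi_t\wedge\varphi_t)$, and a direct evaluation using the Calabi-Yau relation $\omega_0\wedge \mathrm{Re}\Upsilon_0=0$, the identity $\omega_0^3=6\,\vol_{\cD_0}$, and the volume formula \eqref{eq: vol.t} yields $(\tau_0)_t=\frac{6f_t}{7h_t^2}$.

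For $(\tau_3)_t$, subtracting gives $*_t(\tau_3)_t = \frac{4f_t h_t^2}{7}\omega_0^2 + \frac{6f_t^2 h_t}{7}\eta_0\wedge \mathrm{Im}\Upsilon_0$, and since $*_t^2=\mathrm{Id}$ on $\Omega^k(M^7)$ for any $k$ in Riemannian signature, it remains to apply $*_t$ once more. This step is the main computational obstacle: evaluating $*_t\omega_0^2$ and $*_t(\eta_0\wedge \mathrm{Im}\Upsilon_0)$ with respect to the warped metric \eqref{eq: metric.t}. Rather than unpacking an orthonormal coframe, I would exploit the already-known identity $*_t\psi_t=\varphi_t$: the warped splitting forces $*_t\omega_0^2=\alpha\,\eta_0\wedge \omega_0$ and $*_t(\eta_0\wedge \mathrm{Im}\Upsilon_0)=\beta\,\mathrm{Re}\Upsilon_0$ for some scalars $\alpha, \beta$, and matching the coefficients of $\eta_0\wedge \omega_0$ and $\mathrm{Re}\Upsilon_0$ on both sides of $*_t\psi_t=\varphi_t$ pins down $\alpha=2f_t/h_t^2$ and $\beta=-1/f_t$. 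Substituting reproduces the stated formula $(\tau_3)_t=\frac{8}{7}f_t^2\eta_0\wedge \omega_0-\frac{6}{7}f_t h_t\,\mathrm{Re}\Upsilon_0$.
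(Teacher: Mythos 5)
Your proposal is correct and follows essentially the same route as the paper: compute $d\varphi_t=f_th_t^2\,\omega_0^2$ and $d\psi_t=0$ to kill $(\tau_1)_t,(\tau_2)_t$, extract $(\tau_0)_t=\tfrac17 *_t(d\varphi_t\wedge\varphi_t)$, and obtain $(\tau_3)_t=*_t\bigl(d\varphi_t-(\tau_0)_t\psi_t\bigr)$. The only (minor, and rather tidy) difference is that you pin down $*_t\omega_0^2$ and $*_t(\eta_0\wedge\mathrm{Im}\Upsilon_0)$ by matching coefficients in $*_t\psi_t=\varphi_t$, whereas the paper simply asserts these Hodge star values; your shape assumption for these duals is the standard $\mathrm{SU}(3)$-type decomposition fact and the constants you obtain agree with the paper's.
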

	\begin{proof}
		To compute the intrinsic torsion forms we need to compute $d\varphi_t$ and $d\psi_t$. Since $d\eta_0=\omega_0$, and  $(\omega_0,\Upsilon_0)$ is a Calabi--Yau structure the form $\omega_0$, $\Upsilon_0$ are closed and $\omega_0\wedge\Upsilon_0=0$. Thus we get
		\begin{align*}
			d\varphi_t&=f_th_t^2 d(\eta_0\wedge\omega_0)+h_t^3 d({\rm{Re}}\Upsilon_0)=f_th_t^2\omega_0^2,\\
			d\psi_t&=h_t^4(d\omega_0^2)-f_th_t^3 d(\eta_0\wedge {{\rm{Im}}}\Upsilon_0)=-f_th_t^3 \omega_0\wedge {{\rm{Im}}}\Upsilon_0=0.
		\end{align*}
		From \eqref{eq:dphi_int}, \eqref{eq:dpsi_int} this implies $(\tau_1)_t=0, (\tau_2)_t=0$. 
		\begin{align*}
			(\tau_0)_t&= \frac{1}{7}*_t(d\varphi_t \wedge\varphi_t)=\frac{f_t^2h_t^4}{7} *_t(\eta_0\wedge\omega_0^3)=\frac{6f_t^2h_t^4}{7f_th_t^6} = \frac{6f_t}{7h_t^2}.
		\end{align*}
		
		\begin{align*}
			(\tau_3)_t&= *_t(d\varphi_t-(\tau_0)_t\psi_t)=*_t\left(\frac{4}{7}f_th_t^2\omega_0^2+\frac{6f_t^2h_t}{7}\eta_0\wedge{\rm{Im}}\Upsilon_0\right)\\
			&= \frac{4}{7}f_th_t^2 *_t(\omega_0^2)++\frac{6f_t^2h_t}{7}*_t(\eta_0\wedge{\rm{Im}}\Upsilon_0)=\frac{8f_t^2}{7}\eta_0\wedge\omega_0-\frac{6f_th_t}{7} {\rm{Re}}\Upsilon_0.
		\end{align*}
	\end{proof}
	
	The intrinsic torsion forms computed above can be used to compute the torsion tensor and Ricci tensor for the family $\varphi_t$. The next proposition is again done in \cite[Prop. 3.2]{lotay-saearp-saavedra}.
	
	\begin{proposition}\label{prop:Tt_ccY}
		The torsion tensor $T_t$ for the family $\varphi_t$ in \eqref{eq:phit} is given by
		\begin{align}
			\label{eq:Tt} T_t&= -\frac{3f_t^3}{2h_t^2}\eta_0^2+\frac{f_t}{2} g_{\cD_0}.
		\end{align}  
	\end{proposition}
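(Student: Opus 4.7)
The plan is to exploit the observation from the previous lemma that $\varphi_t$ is co-closed, i.e.\ $(\tau_1)_t = (\tau_2)_t = 0$, so that the general intrinsic-torsion formula \eqref{eq:inttorsions} collapses to the co-closed expression \eqref{eq:torsion.coclosed}:
\[
T_t \;=\; \frac{(\tau_0)_t}{4}\,g_t \;-\; \frac{1}{4}\,j_{\varphi_t}\!\bigl((\tau_3)_t\bigr).
\]
The first summand is immediate from $(\tau_0)_t = \frac{6f_t}{7h_t^2}$ and $g_t = f_t^2\eta_0^2 + h_t^2 g_{\cD_0}$, producing $\frac{3f_t^3}{14h_t^2}\eta_0^2 + \frac{3f_t}{14}g_{\cD_0}$. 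The entire content of the proposition is therefore the computation of $j_{\varphi_t}((\tau_3)_t)$.

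For this, I would first observe that every tensor in sight --- the metric $g_t$, the $\G2$-form $\varphi_t$, and the 3-form $(\tau_3)_t$ --- respects the $g_t$-orthogonal splitting $TM = \langle \xi_0 \rangle \oplus \cD_0$, where $\xi_0$ is the Reeb vector field dual to $\eta_0$. Consequently, $j_{\varphi_t}((\tau_3)_t)$ must take the block-diagonal form $\alpha(t)\,\eta_0^2 + \beta(t)\,g_{\cD_0}$ for scalar functions $\alpha, \beta$. Since $(\tau_3)_t \in \Omega^3_{27}$, its image under $j_{\varphi_t}$ is trace-free with respect to $g_t$, giving the linear relation $\alpha f_t^{-2} + 6\beta h_t^{-2} = 0$ and reducing the problem to pinning down a single scalar.

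To fix that scalar, I would evaluate $j_{\varphi_t}((\tau_3)_t)(\xi_0, \xi_0) = *_t\bigl((\xi_0 \lrcorner \varphi_t) \wedge (\xi_0 \lrcorner \varphi_t) \wedge (\tau_3)_t\bigr)$ directly via \eqref{eq:jmap}. The identities $\xi_0 \lrcorner \eta_0 = 1$, $\xi_0 \lrcorner \omega_0 = 0$, and $\xi_0 \lrcorner \mathrm{Re}\Upsilon_0 = 0$ give $\xi_0 \lrcorner \varphi_t = f_t h_t^2 \omega_0$, so the triple wedge becomes $f_t^2 h_t^4\,\omega_0^2 \wedge (\tau_3)_t$. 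Since $\omega_0 \wedge \Upsilon_0 = 0$ for type reasons on the 3-complex-dimensional transverse Calabi--Yau distribution, the $\mathrm{Re}\Upsilon_0$ summand of $(\tau_3)_t$ drops out, and only the $\frac{8}{7}f_t^2\,\eta_0 \wedge \omega_0$ piece contributes. This yields an explicit scalar multiple of $\vol_t = f_t h_t^6\,\eta_0 \wedge \vol_{\cD_0}$, whose Hodge dual gives $\alpha$; the trace-free relation then determines $\beta$.

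Substituting back, the term $\frac{(\tau_0)_t}{4}g_t$ combines with $-\tfrac14 j_{\varphi_t}((\tau_3)_t)$ to produce exactly $-\frac{3f_t^3}{2h_t^2}\eta_0^2 + \frac{f_t}{2}g_{\cD_0}$. The main delicacy is keeping track of the powers of $f_t$ and $h_t$ arising from the warped metric $g_t$ and the $t$-dependent Hodge star $*_t$; the symmetry argument via the $\cD_0$-splitting is what turns the \emph{a priori} computation of a full symmetric 2-tensor on a warped product into the determination of a single scalar and makes the whole calculation tractable.
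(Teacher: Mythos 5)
Your proposal is correct and reaches the paper's intermediate result $j_{\varphi_t}((\tau_3)_t)=\frac{48f_t^3}{7h_t^2}\eta_0^2-\frac{8f_t}{7}g_{\cD_0}$, but by a genuinely different route. The paper splits $(\tau_3)_t=\frac{8f_t}{7h_t^2}\varphi_t-2f_th_t\,\mathrm{Re}\Upsilon_0$, uses $j_{\varphi_t}(\varphi_t)=6g_t$, and then computes $j_{\varphi_t}(\mathrm{Re}\Upsilon_0)$ block by block, which requires the $\SU(3)$-structure identity $(X\lrcorner\omega_0)\wedge(Y\lrcorner\mathrm{Re}\Upsilon_0)\wedge\mathrm{Re}\Upsilon_0=-2g_{\cD_0}(X,Y)\vol_{\cD_0}$ on $\cD_0$. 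You instead use two structural facts — that the answer has the form $\alpha\,\eta_0^2+\beta\,g_{\cD_0}$ and that $j_{\varphi_t}$ is trace-free on $\Omega^3_{27}$ — to reduce everything to the single evaluation at $(\xi_0,\xi_0)$, where only the elementary wedge $\omega_0^2\wedge\eta_0\wedge\omega_0=\eta_0\wedge\omega_0^3$ survives; your numbers ($\alpha=\frac{48f_t^3}{7h_t^2}$, $\beta=-\frac{8f_t}{7}$ from $\alpha f_t^{-2}+6\beta h_t^{-2}=0$) check out. What your route buys is the complete avoidance of the $\SU(3)$ wedge identity and of any computation on the $\cD_0$-block; what it gives up is that the paper's explicit formulas \eqref{eq:jreupsilon_xi0}, \eqref{eq:jreupsilonD} and \eqref{eq:jeta0omega0} are reused verbatim in the Ricci computation of \Cref{prop:Rict_ccY}, so the longer calculation pays for itself later. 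One point you should tighten: the claim that the $\cD_0$-block of $j_{\varphi_t}((\tau_3)_t)$ is a \emph{multiple} of $g_{\cD_0}$ does not follow merely from the tensors "respecting the splitting" (that only rules out cross terms); you need the pointwise $\SU(3)$-invariance of $\eta_0,\omega_0,\Upsilon_0$ together with the fact that the trivial representation occurs exactly once in $S^2\cD_0^*$ and not at all in $\cD_0^*$ — or else the paper's direct verification. The trace-free relation alone only constrains the trace of that block, not its shape.
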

	\begin{proof}
		Since $d\psi_t=0$  from \eqref{eq:torsion.coclosed} $T_t$ is  a symmetric $2$-tensor given by
		\begin{align*}
			T_t&=\frac{(\tau_0)_t}{4}g_t -\frac{1}{4}j_{\varphi_t}(\tau_3)_t.
		\end{align*}
		From \eqref{eq: metric.t}, and \eqref{eq:tau0t},  
		\begin{align*}
			(\tau_0)_t g_t &= \frac{6f_t^3}{7h_t^2}\eta_0^2+\frac{6f_t}{7}g_{\cD_0}.
		\end{align*}

		For any vectors $X,Y \in \Gamma(TM)$, from \eqref{eq:jmap} 
		\begin{align*}
			j_{\varphi_t}(\tau_3)_t (X,Y)=*_t((X\lrcorner\varphi_t)\wedge (Y\lrcorner\varphi_t)\wedge (\tau_3)_t).
		\end{align*}
		Rewriting
		\begin{align*}
			(\tau_3)_t&= \frac{8f_t}{7h_t^2}\varphi_t-2f_th_t {\rm{Re}}\Upsilon_0,
		\end{align*}
		and using the linearity of $j_{\varphi_t}$ and $j_{\g2_t}(\g2_t)=6g_t$, we get that 
		\begin{align*}
			j_{\varphi_t}(\tau_3)_t&=  \frac{8f_t}{7h_t^2} j_{\varphi_t}(\varphi_t)-2f_th_t j_{\varphi_t}({\rm{Re}}\Upsilon_0)=\frac{48f_t}{7h_t^2}g_t-2f_th_t j_{\varphi_t}({\rm{Re}}\Upsilon_0).
		\end{align*}
		For $\xi_0=\eta_0^\sharp$ and $Y\in\Gamma(TM)$,
		\begin{align}
			\label{eq:jreupsilon_xi0} j_{\varphi_t}({\rm{Re}}\Upsilon_0)(\xi_0,Y)&=*_t((\xi_0\lrcorner\varphi_t)\wedge (Y\lrcorner\varphi_t)\wedge {\rm{Re}}\Upsilon_0)=f_th_t^2 *_t(\omega_0\wedge (Y\lrcorner\varphi_t)\wedge {\rm{Re}}\Upsilon_0)=0.
		\end{align}
		On $\cD_0$, for $X,Y\in \Gamma(T\cD_0)$
		\begin{align*}
			j_{\varphi_t}({\rm{Re}}\Upsilon_0)(X,Y)&=*_t(f_t^2h_t^4(X\lrcorner (\eta_0\wedge\omega_0))\wedge(Y\lrcorner (\eta_0\wedge\omega_0))\wedge {\rm{Re}}\Upsilon_0)\\
			&+*_t(2f_th_t^5(X\lrcorner (\eta_0\wedge\omega_0))\wedge(Y\lrcorner {\rm{Re}}\Upsilon_0)\wedge {\rm{Re}}\Upsilon_0)\\
			&+*_t(h_t^6(X\lrcorner {\rm{Re}}\Upsilon_0)\wedge(Y\lrcorner {\rm{Re}}\Upsilon_0)\wedge {\rm{Re}}\Upsilon_0)
		\end{align*}
		
		Observe that for $X\in \Gamma(T\cD_0)$, $X\lrcorner (\eta_0\wedge\omega_0)=-\eta_0\wedge (X\lrcorner\omega_0)$, hence the first term in the above expression vanishes. Similarly the third term in the above expression is a $7$-form on a $6$-dimensional distribution and hence vanishes. Also since $(\omega_0,\Upsilon_0)$ defines an $\rm{SU}(3)$-structure on $\cD_0$, for $X,Y\in\Gamma(T\cD_0)$
		\begin{align*}
			(X\lrcorner\omega_0)\wedge (Y\lrcorner {\rm{Re}}\Upsilon_0)\wedge {\rm{Re}}\Upsilon_0&=-2g_{\cD_0}(X,Y)\vol_{\cD_0},
		\end{align*}
		
		and hence, 
		\begin{align}
			\begin{split} \label{eq:jreupsilonD}
				j_{\varphi_t}({\rm{Re}}\Upsilon_0)(X,Y)&=-2f_th_t^5*_t(\eta_0\wedge (X\lrcorner\omega_0)\wedge (Y\lrcorner {\rm{Re}}\Upsilon_0)\wedge {\rm{Re}}\Upsilon_0)\\ &=4f_th_t^5*_t(g_{\cD_0}(X,Y)\eta_0\wedge \vol_{\cD_0})=\frac{4}{h_t}g_{\cD_0}(X,Y).
			\end{split}
		\end{align}

		Combining everything we get that 
		\begin{align}\label{eq:jtau3t}
			j_{\varphi_t}(\tau_3)_t&=\frac{48f_t^3}{7h_t^2}\eta_0^2-\frac{8f_t}{7}g_{\cD_0}.
		\end{align}
		Now the assertion follows from using the computations of $(\tau_0)_t g_t$ and $ j_{\varphi_t}(\tau_3)_t$.
	\end{proof}
	
	We now compute the Ricci tensor of $\g2_t$ explicitly using Bryant's formula \eqref{eq:Riccitensor}.
	
	\begin{proposition}\label{prop:Rict_ccY}
		The Ricci tensor $\Ric_t$ for the family $\varphi_t$ in \eqref{eq:phit} is given by
		\begin{align}
			\label{eq:Rict} \Ric_t&= \frac{3f_t^4}{2h_t^4}\eta_0^2-\frac{f_t^2}{2h_t^2} g_{\cD_0}.
		\end{align}
	\end{proposition}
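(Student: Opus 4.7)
The plan is to apply Bryant's formula \eqref{eq:Riccicoclosed} for the Ricci tensor of a co-closed $\G2$-structure---available since $d\psi_t=0$---with the intrinsic torsion forms $(\tau_0)_t$, $(\tau_3)_t$ from \eqref{eq:tau0t}--\eqref{eq:tau3t}. This reduces the problem to evaluating the scalar multiple of $g_t$ coming from $\tfrac{3}{8}(\tau_0)_t^2-\tfrac{1}{2}|(\tau_3)_t|^2_{g_t}$ and the image under $j_{\varphi_t}$ of the three-form
\[
\tfrac{1}{4}*_t d(\tau_3)_t-\tfrac{1}{8}(\tau_0)_t(\tau_3)_t+\tfrac{1}{64}Q\bigl((\tau_3)_t,(\tau_3)_t\bigr),
\]
and then checking that the sum collapses to $\tfrac{3f_t^4}{2h_t^4}\eta_0^2-\tfrac{f_t^2}{2h_t^2}g_{\cD_0}$.

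First, I would compute $d(\tau_3)_t$. Since $d\eta_0=\omega_0$ and $d\omega_0=d\,{\rm Re}\Upsilon_0=0$, only the $\eta_0\wedge\omega_0$ summand contributes: $d(\tau_3)_t=\tfrac{8f_t^2}{7}\omega_0^2$. To compute $*_t d(\tau_3)_t$ I would use the identity $*_t d\varphi_t=(\tau_3)_t+(\tau_0)_t\varphi_t$ (immediate from \eqref{eq:dphi_int} with $\tau_1=0$) combined with $d\varphi_t=f_th_t^2\omega_0^2$ to express $*_t(\omega_0^2)$ as a known combination of $\varphi_t$ and $(\tau_3)_t$. Applying $j_{\varphi_t}$ then reduces to $j_{\varphi_t}(\varphi_t)=6g_t$ and to the expression \eqref{eq:jtau3t} for $j_{\varphi_t}((\tau_3)_t)$, which is already in hand from the previous proposition; analogously, $j_{\varphi_t}((\tau_0)_t(\tau_3)_t)=(\tau_0)_t\, j_{\varphi_t}((\tau_3)_t)$. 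The scalar factor is equally direct: the orthogonality $(\eta_0\wedge\omega_0)\perp_t{\rm Re}\Upsilon_0$ together with the induced norms $|\eta_0|^2_t=f_t^{-2}$, $|\omega_0|^2_t=3h_t^{-4}$, and $|{\rm Re}\Upsilon_0|^2_t=4h_t^{-6}$ (the last determined by the normalization $\Upsilon\wedge\overline{\Upsilon}=-\tfrac{4i}{3}\omega^3$ in Definition \ref{def:cCY}) gives $|(\tau_3)_t|^2_{g_t}=\tfrac{48f_t^2}{7h_t^4}$.

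The main obstacle is the Bryant pairing $Q((\tau_3)_t,(\tau_3)_t)$. Writing $(\tau_3)_t=\tfrac{8f_t}{7h_t^2}\varphi_t-2f_th_t\,{\rm Re}\Upsilon_0$ and exploiting bilinearity of $Q$ reduces this to the three pairings $Q(\varphi_t,\varphi_t)$, $Q(\varphi_t,{\rm Re}\Upsilon_0)$, and $Q({\rm Re}\Upsilon_0,{\rm Re}\Upsilon_0)$. The first is a universal scalar multiple of $\varphi_t$ by the $\G2$-equivariance of $Q$, so it contributes a multiple of $g_t$ after $j_{\varphi_t}$. The remaining two can be evaluated from the definition \eqref{eq:Qmap} in an orthonormal frame adapted to the splitting $TM=\langle\xi_0\rangle\oplus\cD_0$, so that the computation reduces to standard algebraic identities for the transverse $\mathrm{SU}(3)$-structure $(\omega_0,\Upsilon_0)$ on $\cD_0$. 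Since $(\tau_3)_t$ respects this splitting, the resulting symmetric $2$-tensor $j_{\varphi_t}(Q((\tau_3)_t,(\tau_3)_t))$ is automatically of the form $A(t)\eta_0^2+B(t)g_{\cD_0}$.

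Summing all contributions, collecting the dependence on $f_t,h_t$, and simplifying yields the claimed formula. As a useful sanity check, the scalar curvature predicted by $\Ric_t$ is $R_t=-\tfrac{3f_t^2}{2h_t^4}$, which agrees with the general expression $R=\tfrac{21}{8}\tau_0^2-\tfrac{1}{2}|\tau_3|^2$ for the scalar curvature of a co-closed $\G2$-structure evaluated on our intrinsic torsion forms; this pins down the trace of the final expression and corroborates the arithmetic, although the traceless part still relies fundamentally on the correct evaluation of $Q$.
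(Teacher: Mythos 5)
Your proposal follows essentially the same route as the paper: apply Bryant's co-closed formula \eqref{eq:Riccicoclosed} to $(\tau_0)_t,(\tau_3)_t$, evaluate the scalar coefficient, push the remaining $3$-forms through $j_{\varphi_t}$ using $j_{\varphi_t}(\varphi_t)=6g_t$ together with the values of $j_{\varphi_t}$ on $\eta_0\wedge\omega_0$ and ${\rm Re}\Upsilon_0$ already in hand, and compute $Q((\tau_3)_t,(\tau_3)_t)$ in an adapted frame; your committed intermediate values ($d(\tau_3)_t$, $*_td(\tau_3)_t=\tfrac{16f_t^3}{7h_t^2}\eta_0\wedge\omega_0$, $|(\tau_3)_t|^2=\tfrac{48f_t^2}{7h_t^4}$, and the scalar-curvature check $R_t=-\tfrac{3f_t^2}{2h_t^4}$) all agree with the paper. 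The only (harmless) differences are organizational: you obtain $*_t(\omega_0^2)$ via the identity $*_td\varphi_t=(\tau_0)_t\varphi_t+(\tau_3)_t$ rather than by direct Hodge-star computation, and you split $Q$ by bilinearity, whereas the paper evaluates $Q((\tau_3)_t,(\tau_3)_t)$ directly from \eqref{eq:Qmap}.
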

	\begin{proof}
		Since $\varphi_t$ is co-closed, by \eqref{eq:Riccicoclosed} 
		\begin{align*}
			\Ric_t = -\left(-\frac 38 (\tau_0)_t^2  + \frac 12|(\tau_3)_t|^2 \right) g_t + j_{\g2_t}\left(  \frac 14*_td(\tau_3)_t-\frac 18(\tau_0)_t(\tau_3)_t  + \frac{1}{64}Q_t((\tau_3)_t, (\tau_3)_t)\right).
		\end{align*}
		The coefficient of $g_t$ in the above expression can be easily computed by using \eqref{eq:tau0t},\eqref{eq:tau3t} and is given by 
		\begin{align*}
			-\left(-\frac 38 (\tau_0)_t^2  + \frac 12|(\tau_3)_t|^2 \right)&=-\frac{309f_t^2}{98h_t^4}.
		\end{align*}
		By \eqref{eq:jreupsilon_xi0}, \eqref{eq:jreupsilonD} and using the fact that $j_{\g2_t}(\g2_t)=6g_t$ we get that 
		\begin{align}\label{eq:jeta0omega0}
			j_{\g2_t}(\eta_0\wedge\omega_0)&=\frac{6f_t}{h_t^2}\eta_0^2+\frac{2}{f_t}g_{\cD_0}.
		\end{align}
		Thus we can compute
		\begin{align*}
			j_{\g2_t}(*_td(\tau_3)_t)& =j_{\g2_t}\left(\frac{16f_t^3}{7h_t^2}\eta_0\wedge\omega_0\right) =\frac{96f_t^4}{7h_t^4}\eta_0^2+\frac{32f_t^2}{7h_t^2}g_{\cD_0}.
		\end{align*}
		We know $j_{\g2_t}(\tau_3)_t$ from \eqref{eq:jtau3t} so the only quantity left to compute is the quadratic term $Q_t((\tau_3)_t,(\tau_3)_t)$.
		
		\medskip
		
		If we denote by $\{e_i,i=1..7\}$ a local orthogonal basis of $TM$ with respect to $g_t$ then \eqref{eq:Qmap} implies
		
		\begin{align*}
			Q_t((\tau_3)_t,(\tau_3)_t)&= *_{t}\left( (\psi_t)_{ijkl}g_t^{ii}g_t^{jj}g_t^{kk}g_t^{ll}(e_j\lrcorner e_i \lrcorner *_t(\tau_3)_t)\wedge (e_l\lrcorner e_k\lrcorner *_t(\tau_3)_t)\right)\\
			&=\frac{1024f_t^3}{49h_t^2}\eta_0\wedge\omega_0+\frac{576f_t^2}{49h_t} {\rm{Re}}\Upsilon_0.
		\end{align*}
		From \eqref{eq:jeta0omega0}, \eqref{eq:jreupsilonD} we can  now easily compute $j_{\g2_t}(Q_t((\tau_3)_t,(\tau_3)_t))$ which then gives us the result.
		
	\end{proof}
	
	Again the norm squared of the torsion and $\tr T$ in the next lemma were computed in \cite[Prop. 3.3]{lotay-saearp-saavedra}. We need the expression of $T^tT$ for the Ricci-harmonic flow.
	\begin{lemma}
		For the torsion tensor $T_t$ of the family $\g2_t$ in \eqref{eq:phit},
		\begin{align}
			\label{eq:TtranposeT} T_t^tT_t&=\frac{9f_t^4}{4h_t^4}\eta_0^2+\frac{f_t^2}{4h_t^2}g_{\cD_0},\\
			\label{eq:normTsquare} |T_t|^2_{t}&=\frac{15f_t^2}{4h_t^4},\\
			\label{eq:traceT} \tr T_t&=\frac{3f_t}{2h_t^2}.
		\end{align}
	\end{lemma}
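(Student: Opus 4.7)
The plan is straightforward once we adopt a frame adapted to the warped-product decomposition $TM = \langle \xi_0 \rangle \oplus \cD_0$, where $\xi_0 = \eta_0^\sharp$ is the Reeb field of the underlying Sasakian structure. Since $T_t$ is a symmetric $2$-tensor (by \Cref{prop:Tt_ccY}, using that $\g2_t$ is co-closed), we have $T_t^tT_t = T_t^2$, so all three identities reduce to reading off eigenvalues of $T_t$ with respect to $g_t$ and then re-packaging them as $(0,2)$-tensors.

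Concretely, I would choose a local orthonormal frame $\{e_0, e_1, \dots, e_6\}$ for $g_t$ by setting $e_0 = f_t^{-1}\xi_0$ and $e_i = h_t^{-1}v_i$ for $i = 1,\ldots,6$, where $\{v_1,\ldots,v_6\}$ is a $g_{\cD_0}$-orthonormal frame of $\cD_0$. This is consistent with the warped metric \eqref{eq: metric.t}, and the dual coframe satisfies $e_0^\flat = f_t\eta_0$ and $\sum_{i=1}^{6} e_i^\flat \otimes e_i^\flat = h_t^2 g_{\cD_0}$. Substituting into the formula \eqref{eq:Tt} for $T_t$, and using $\eta_0(\xi_0)=1$ together with $g_{\cD_0}(\xi_0,\cdot)=0$, I find that $T_t$ is diagonal in this frame with
\begin{align*}
T_t(e_0,e_0) = -\frac{3f_t}{2h_t^2}, \qquad T_t(e_i,e_i) = \frac{f_t}{2h_t^2} \quad (i=1,\dots,6).
\end{align*}

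From this diagonal form, the trace and squared norm are immediate:
\begin{align*}
\tr T_t = -\frac{3f_t}{2h_t^2} + 6\cdot \frac{f_t}{2h_t^2} = \frac{3f_t}{2h_t^2}, \qquad |T_t|^2_t = \frac{9f_t^2}{4h_t^4} + 6\cdot \frac{f_t^2}{4h_t^4} = \frac{15f_t^2}{4h_t^4},
\end{align*}
matching \eqref{eq:traceT} and \eqref{eq:normTsquare}. For $T_t^tT_t = T_t^2$, the diagonal entries square to $\tfrac{9f_t^2}{4h_t^4}$ in the $e_0$-direction and $\tfrac{f_t^2}{4h_t^4}$ on $\cD_0$, so
\begin{align*}
T_t^tT_t = \frac{9f_t^2}{4h_t^4}\, e_0^\flat\otimes e_0^\flat + \frac{f_t^2}{4h_t^4}\sum_{i=1}^{6} e_i^\flat\otimes e_i^\flat = \frac{9f_t^2}{4h_t^4}\cdot f_t^2\eta_0^2 + \frac{f_t^2}{4h_t^4}\cdot h_t^2 g_{\cD_0},
\end{align*}
which simplifies to \eqref{eq:TtranposeT}.

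There is no real obstacle here: the only thing to be careful about is distinguishing the $g_{\cD_0}$-norm (used to define the ansatz) from the $g_t$-norm (with respect to which torsion norms, traces and the transpose are taken), and the adapted frame above handles this bookkeeping automatically. Alternatively, one could verify the three identities purely by tensor algebra by substituting $T_t = -\tfrac{3f_t^3}{2h_t^2}\eta_0^2 + \tfrac{f_t}{2}g_{\cD_0}$ and $g_t^{-1} = f_t^{-2}\xi_0^2 + h_t^{-2}g_{\cD_0}^{-1}$ into $(T_t^tT_t)_{ij} = g_t^{kl}(T_t)_{ki}(T_t)_{lj}$ and $|T_t|^2_t = g_t^{ij}g_t^{kl}(T_t)_{ik}(T_t)_{jl}$, using the orthogonality of $\eta_0$ and $g_{\cD_0}$; the orthonormal-frame approach is just a cleaner presentation of the same calculation.
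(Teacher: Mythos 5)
Your proof is correct and is essentially the same computation as the paper's: decompose $T_t$ into its $\eta_0^2$ and $g_{\cD_0}$ components and contract with $g_t^{-1}=f_t^{-2}\eta_0^2{}^{-1}\oplus h_t^{-2}g_{\cD_0}^{-1}$; the paper does this directly in index notation, while your orthonormal-frame packaging (and your closing remark) is just a cleaner presentation of the identical calculation. All the eigenvalues and the resulting expressions check out.
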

	\begin{proof}
		Since the tensor $T_t$ is symmetric from \eqref{eq:Tt} 
		\begin{align*}
			T_t^tT_t&=\sum_{i,j}(T_t)_{ia} (T_t)_{jb}g_{t}^{ab}=\frac{f_t^2}{4}\left(\frac{g_{\cD_0}}{h_t^2}\right)+\frac{9f_t^6}{4h_t^4}\left(\frac{\eta_0^2}{f_t^2}\right)=\frac{9f_t^4}{4h_t^4}\eta_0^2+\frac{f_t^2}{4h_t^2}g_{\cD_0}.
		\end{align*}
		Similarly 
		\begin{align*}
			|T_t|_t^2&= \sum_{i,j}((T_t)_{ij}g_t^{ij})^2=6\frac{f_t^2}{4h_t^4}+\frac{9f_t^6}{4h_t^4f_t^4} =\frac{15f_t^2}{4h_t^4},
		\end{align*}
		and \begin{align*}
			\tr T_t&= \sum_i (T_t)_{ii}g_t^{ii}=-\frac{3f_t^3}{2h_t^2f_t^2}+\frac{6f_t}{2h_t^2}=\frac{3f_t}{2h_t^2}.
		\end{align*}
	\end{proof}

	\section{Solutions to flows of $\G2$-structures on contact Calabi-Yau manifolds}\label{sec:mainresults}
	
	We use the expressions of various tensors which we computed in the previous section to give explicit solutions of \eqref{eq:rhfeqn}, \eqref{eq:pgflw1}, \eqref{eq:pgflw2}, and \eqref{eq:neggradflow}.
	
	\subsection{Ricci-harmonic flow}
	
	\begin{theorem}\label{thm:RHFcCY}
		Let $(M^7, \g2_0)$ be a $7$-dimensional contact Calabi-Yau manifold with the setup \eqref{eq: standard setup}. The Ricci-harmonic flow starting with $\g2_0$ is explicitly solved by the family of co-closed $\G2$-structures given by
		\begin{equation}\label{eq:rhfsol}
			\tag{Sol}
			\begin{aligned}
				\varphi_t &= f_th_t^2\eta_0\wedge \omega_0+h_t^3{\rm{Re}} \Upsilon_0,\nonumber \\
				g_t&=f_t^2\eta_0^2+h_t^2g_{\cD_0}, \nonumber \\
				\vol_t&=f_th_t^6\eta_0\wedge\vol_{\cD_0}, \nonumber \\
				\psi_t &=\frac{1}{2}h_t^4\omega_0^2-f_th_t^3\eta_0\wedge{\rm{Im}}\Upsilon_0,
			\end{aligned}
		\end{equation}
		where $h_t = (1-13a^2t)^{\frac{5}{26}},\ \ f_t=a(1-13a^2t)^{-\frac{3}{26}}$. The solution $\g2_t$ is an ancient solution of the flow with $t\in \left(-\infty, \frac{1}{13a^2}\right)$ and the flow has a finite time singularity at $t=\frac{1}{13a^2}$. If $M$ is compact, then the solution $\g2_t$ is the unique solution starting with $\g2_0$ and the singularity is a Type I singularity.  
	\end{theorem}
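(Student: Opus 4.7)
The plan is to exploit the two-dimensional structure of the ansatz \eqref{eq:phit}: since $\eta_0 \wedge \omega_0$ and $\mathrm{Re}\,\Upsilon_0$ span the relevant $3$-form space for our family, both sides of \eqref{eq:rhfeqn} evaluated on the ansatz should lie in this span, reducing the PDE to a system of two ODEs in $f_t$ and $h_t$. Differentiating \eqref{eq:phit} directly gives the left-hand side $\partial_t \varphi_t = (f_t' h_t^2 + 2 f_t h_t h_t')\,\eta_0 \wedge \omega_0 + 3 h_t^2 h_t'\,\mathrm{Re}\,\Upsilon_0$.

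For the right-hand side, combining \eqref{eq:Rict}, \eqref{eq:TtranposeT}, and \eqref{eq:normTsquare} algebraically yields the symmetric $2$-tensor
\[
A_t := -\Ric_t + 3\, T_t^t T_t - |T_t|^2 g_t = \frac{3 f_t^4}{2 h_t^4}\,\eta_0^2 - \frac{5 f_t^2}{2 h_t^2}\,g_{\cD_0}.
\]
Rewriting $g_{\cD_0} = h_t^{-2}(g_t - f_t^2\,\eta_0^2)$ lets me express $A_t = \frac{4 f_t^2}{h_t^4}(e^0 \otimes e^0) - \frac{5 f_t^2}{2 h_t^4}\,g_t$, where $e^0 = f_t\,\eta_0$ is the $g_t$-unit Reeb covector. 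The identities $g_t \diamond \varphi_t = 3\varphi_t$ and $(e^0 \otimes e^0) \diamond \varphi_t = e^0 \wedge ((\xi_0/f_t) \lrcorner \varphi_t) = f_t h_t^2\,\eta_0 \wedge \omega_0$ (the latter because $\mathrm{Re}\,\Upsilon_0$ is purely horizontal, so only the Reeb-containing summand of $\varphi_t$ survives) then give
\[
A_t \diamond \varphi_t = -\frac{7 f_t^3}{2 h_t^2}\,\eta_0 \wedge \omega_0 - \frac{15 f_t^2}{2 h_t}\,\mathrm{Re}\,\Upsilon_0.
\]

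Next, I would show that the divergence term in \eqref{eq:rhfeqn} vanishes. After the analogous rewriting, $T_t$ is a spatially constant linear combination of $\eta_0 \otimes \eta_0$ and $g_t$. The Reeb field $\xi_0$ remains Killing for each $g_t$ (the deformation preserves the Sasakian symmetry since $f_t, h_t$ are spatially constant), so $|\eta_0|_{g_t}^2 = 1/f_t^2$ is spatially constant; thus $\nabla^i(\eta_0)_i = 0$ and the Killing-field identity $(\eta_0)_i \nabla^i(\eta_0)_j = -\tfrac{1}{2}\nabla_j(|\eta_0|_{g_t}^2) = 0$ together yield $\Div_{g_t}(\eta_0 \otimes \eta_0) = 0$; combined with $\Div_{g_t}(g_t) = 0$ this gives $\Div T_t = 0$. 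Matching coefficients of $\eta_0 \wedge \omega_0$ and $\mathrm{Re}\,\Upsilon_0$ then reduces \eqref{eq:rhfeqn} to
\[
f_t' = \frac{3 f_t^3}{2 h_t^4}, \qquad h_t' = -\frac{5 f_t^2}{2 h_t^3},
\]
which is solved uniquely (Picard--Lindelöf) by $f_t = a(1 - 13 a^2 t)^{-3/26}$ and $h_t = (1 - 13 a^2 t)^{5/26}$ with initial values $f_0 = a$, $h_0 = 1$, as verified by direct substitution.

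Finally, these formulas show existence on $(-\infty, 1/(13 a^2))$, yielding an ancient solution with a finite-time singularity at $\tau = 1/(13 a^2)$. From \eqref{eq:normTsquare}, $|T_t|^2 = \frac{15 a^2}{4}(1 - 13 a^2 t)^{-1}$, so $|T_t|^4 \sim (\tau - t)^{-2}$; analogous explicit computations for $|\Rm_t|^2$ and $|\nabla T_t|^2$ starting from the warped metric \eqref{eq: metric.t} display the same $(\tau - t)^{-2}$ scaling, so $\Lambda(t) \leq C/(\tau - t)$, which together with the lower bound \eqref{eq:Lambdablowuprate} is precisely the Type I condition of \Cref{def:sing.types}. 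Uniqueness on compact $M$ follows from the short-time uniqueness of \eqref{eq:rhfeqn} in \cite{dwivedi-rhf}. The main technical subtlety is the vanishing of $\Div T_t$, which hinges on the persistence of the Killing symmetry under the one-parameter deformation; once this is in place, the rest of the proof is a matter of careful algebraic bookkeeping.
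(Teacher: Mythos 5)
Your proposal is correct and follows essentially the same route as the paper: the identical tensor computations reduce \eqref{eq:rhfeqn} on the ansatz to the ODE system $f'=\tfrac{3f^3}{2h^4}$, $h'=-\tfrac{5f^2}{2h^3}$, and the Type I classification comes from the same estimate on $\Lambda(t)$. The only notable variations are your self-contained Killing-field argument for $\Div T_t=0$ (the paper instead cites \cite[Prop.~3.7]{lotay-saearp-saavedra}, with the same underlying mechanism) and a harmless imprecision in asserting that $|\Rm_t|^2$ scales like $(\tau-t)^{-2}$: its transverse part $h_t^{-4}|\Rm_0^{\cD_0}|^2$ actually blows up at the slower rate $(\tau-t)^{-10/13}$, which only strengthens the Type I upper bound on $[0,\tau)$.
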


	\begin{proof}
		From \eqref{eq:rhfeqn} we say that a family of $\G2$-structures $\g2_t$ with initial value $\g2_0$ is a solution to the Ricci-harmonic flow if 
		\begin{align*}
			& \dfrac{\pt \g2_t}{\pt t} = \left(-\Ric_t+3 T_t^tT_t -|T_t|_t^2g_t \right) \diamond \g2_t + (\Div_t T_t)\lrcorner \psi_t.
		\end{align*}
		For the family of conical Calabi--Yau $\G2$-structures defined in \eqref{eq:phit} it was shown in \cite[Proposition 3.7]{lotay-saearp-saavedra} $\Div_tT_t=0$. The proof essentially relies on the fact that since $\xi_0$ preserves the family of $\G2$-structures, $(\del_t)_Xg_{\cD_0}=0$ for all $X\in\Gamma(TM)$, and $(\del_t)_{\xi_0}\eta_0^2=0$ so the only non-vanishing terms in $\del_tT_t$ comes for $X\in \Gamma(T\cD_0)$ from $(\del_t)_X \eta_0^2$  which is orthogonal to $\cD$. Hence for this family the flow \eqref{eq:rhfeqn} becomes
		\begin{align*}
			& \dfrac{\pt \g2_t}{\pt t} = \left(-\Ric_t+3 T_t^tT_t -|T_t|_t^2g_t \right) \diamond \g2_t.
		\end{align*}
		
		\noindent
		Using \eqref{eq:Tt},\eqref{eq:Rict},\eqref{eq:TtranposeT}, and \eqref{eq:normTsquare} one can easily compute the the $2$-tensor 
		\begin{align*}
			-\Ric_t+3 T_t^tT_t -|T_t|_t^2g_t&= \frac{3f_t^4}{2h_t^4}\eta_0^2-\frac{5f_t^2}{2h_t^2}g_{\cD_0} = \frac{4f_t^4}{h_t^4}\eta_0^2-\frac{5f_t^2}{2h_t^4}g_{t}.
		\end{align*}
		
		\noindent
		Using \eqref{eq:diadefn} one can compute that
		\begin{align}\label{eq:etadiamondphit}
			\eta_0^2 \diamond \g2_t&= \eta_0^2(\xi_0,\xi_0)(g_t)^{-1}(\xi_0,\xi_0)(\xi_0\lrcorner\g2_t)\wedge\eta_0 = \frac{f_th_t^2}{f_t^2}\eta_0\wedge\omega_0 =  \frac{h_t^2}{f_t}\eta_0\wedge\omega_0.
		\end{align}
		We now use the fact that $g_t\diamond \g2_t = 3\g2_t$ to compute 
		\begin{align}\label{eq:RHF_iphit}
			(-\Ric_t+3 T_t^tT_t -|T_t|_t^2g_t)\diamond \g2_t&=\frac{4f_t^3}{h_t^2}\eta_0\wedge\omega_0-\frac{15f_t^2}{2h_t^4}\g2_{t}=-\frac{7f_t^3}{2h_t^2}\eta_0\wedge\omega_0-\frac{15f_t^2}{2h_t}{\rm{Re}}\Upsilon_0
		\end{align}
		Taking the time derivative of \eqref{eq:phit}, we get
		\begin{align*}
			\ptt \g2_t= \ddt(f_th_t^2)\eta_0\wedge \omega_0 + \ddt(h_t^3){\rm{Re}} \Upsilon_0,
		\end{align*}
		which on using \eqref{eq:RHF_iphit} gives the following ODEs on $f_t$ and $h_t$,
		\begin{align}\label{eq:rhfODE}
			\ddt(f_th_t^2) = \frac{-7f_t^3}{2h_t^2},\ \ \ddt(h_t^3)=\frac{-15f_t^2}{2h_t},\ \ f_0=a, h_0=0.
		\end{align}
		
		Thus, we have 
		\begin{align*}
			h^2f'+2fhh'=\frac{-7f^3}{2h^2},\ h'=\frac{-5f^2}{2h^3},
		\end{align*}
		from where we get
		\begin{align*}
			f'=\frac{3f^3}{2h^4}\ \ \ \text{and}\ \ \ h'=\frac{-5f^2}{2h^3},\ \ f(0)=a,\ h(0)=1.
		\end{align*}
		We do a separation of variables to the above equation and obtain
		\begin{align*}
			\frac{df}{f}=-\frac 35 \frac{dh}{h}
		\end{align*}
		which on integration and using the initial conditions, gives 
		\begin{align*}
			f=ah^{-\frac 35}.
		\end{align*}
		We now use this expression of $f$ in the equation for $h'$ and then solve the ODE $\ddt h = -\frac{5a}{2}h^{-\frac{21}{5}}$, which again using the initial condition $h_0=1$ gives the expression for $h$ and $f$ is obtained from the relation $f=ah^{-\frac 35}$, which are finally,
		\begin{align}\label{eq:fhfinal}
			h_t = (1-13a^2t)^{\frac{5}{26}},\ \ f_t=a(1-13a^2t)^{-\frac{3}{26}}.
		\end{align}
		This proves that the expressions in \eqref{eq:rhfsol} is a solution to the Ricci-harmonic flow which is an ancient solution with $t\in \left(-\infty, \frac{1}{13a^2}\right)$.
		
		Now suppose $M$ is compact. The uniqueness of $\g2_t$ on compact $M$ follows from the short-time existence and uniqueness result in \cite[Thm. 3.7]{dwivedi-rhf}. Recall from \cite[Prop. 3.1]{lotay-saearp-saavedra} that the norm squared of the Riemann curvature tensor of $\g2_t$ is given by
		\begin{align*}
			|\Rm_t|^2_{g_t} = \frac{1}{h_t^4}|\Rm_0^{\cD_0}|^2_{g_0} + c_0 \frac{f_t^4}{h_t^8},
		\end{align*}
		where $c_0>0$ is an explicit fixed constant. Using our expressions for $f_t$ and $h_t$, we get
		\begin{align}\label{eq:Riemrhf}
			|\Rm_t|^2_{g_t}&= (1-13a^2t)^{-\frac{10}{13}}|\Rm_0^{\cD_0}|^2_{g_0} + c_0 a^4(1-13a^2t)^{-2}.
		\end{align}
		Using our expressions for $f_t, h_t$ and \eqref{eq:normTsquare}, we have
		\begin{align}\label{eq:normTrhf}
			|T_t|^4_{g_t}&= \frac{225}{16}a^4(1-13a^2t)^{-2},
		\end{align}
		and similarly using the expression for $|\del_tT_t|^2_{g_t}$ from \cite[Prop. 3.3]{lotay-saearp-saavedra}, we get
		\begin{align}\label{eq:nablaTrhf}
			|\del_tT_t|^2_{g_t} = c_0'a^4(1-13a^2t)^{-2}\ \ \ \ c_0'>0.
		\end{align}
		Using these, we can compute the quantity $\Lambda(t)$ from \eqref{eq:Lambdadefn}, which is
		\begin{align}\label{eq:Lambdarhf}
			\Lambda(t) = \left(K^2(1-13a^2t)^{-\frac{10}{13}}+c_0a^4(1-13a^2t)^{-2} \right)^{\frac 12},
		\end{align}
		where we let $K=\text{sup}_M |\Rm_0^{\cD_0}|_{g_0} $. We see that $\Lambda(t)$ blows-up as $t\rightarrow \frac{1}{13a^2}$ and hence we have a finite-time singularity by \Cref{thm:rhflte}. Moreover, $\underset{t\in [0, \frac{1}{13a^2}]}{\text{sup}}\ t\Lambda(t) <\infty$ and hence , as per \Cref{def:sing.types}, we have a Type-I singularity at $t=\frac{1}{13a^2}$. 
	\end{proof}
	
	\begin{remark}
		As mentioned before, these are the first compact examples of Ricci-harmonic flows which have a finite-time singularity as well as the first examples of Type I singularity. We also notice that the solution described in \eqref{eq:rhfsol} are ancient but not eternal as the solution fails to exist for $t\geq \frac{1}{13a^2}$. \demo  
	\end{remark}
	
	\begin{remark}
		If we take $a\rightarrow 0$ then the solution \eqref{eq:rhfsol} can be interpreted as giving degenerate \emph{eternal} solutions to the Ricci-harmonic flow given by the Calabi-Yau distribution $\cD_0$. On the contrary, if we take $a\rightarrow \infty$ then we get degenerate solution to the Ricci-harmonic flow which is only defined for non-positive times. \demo 
	\end{remark}
	
	\begin{remark}
		From the expression of the volume form in \eqref{eq:rhfsol}, we see that
		\begin{align*}
			\vol_t = a(1-13a^2t)^{\frac{27}{26}},
		\end{align*}
		and hence the volume form is \emph{decreasing} in time and hence when $M$ is compact then its volume is strictly decreasing in time with
		\begin{align*}
			\Vol(M, g_t) \rightarrow 0\ \ \ \ \ \text{as}\ \ \ \ t\rightarrow \frac{1}{13a^2}.
		\end{align*}
		This phenomenon fits exactly in the behaviour of the volume along the Ricci-harmonic flow which is always decreasing along the flow, see \cite[eq. (4.4)]{dwivedi-rhf}.  \demo
	\end{remark}
	
	\begin{remark}
		We see from the expression in \eqref{eq:RHF_iphit} that the velocity of the flow has components in both the fibre directions and also on the Calabi-Yau distribution $\cD_0$. This is different from the cases considered in \cite{lotay-saearp-saavedra}. \demo
	\end{remark}

	We know from \cite[Thm. 5.7]{dwivedi-rhf} that the condition of uniform continuity of the evolving metrics along the Ricci-harmonic flow, together with a pointwise bound on the quantity $R+4|T|^2+2\Div (\Vop{T})$, with $R$ being the scalar curvature of the metric, would lead to long-time existence results. As a result, the formation of singularities should be related to the lack of uniform
	continuity of the evolving metrics and the blow-up of $R+4|T|^2+2\Div (\Vop{T})$, so we examine the uniform continuity of the evolving metrics for our solution. We have the following result.
	
	\begin{proposition}\label{prop:unicontmetrics}
		Let $\g2_t$ in \eqref{eq:rhfsol} be the solution to \eqref{eq:rhfeqn}. Then the underlying metrics $g_t$ are uniformly continuous in $t$ on $(-\infty, \tau]$ for any $\tau < \frac{1}{13a^2}$, but it is {\bf{not}} uniformly continuous on $[\tau, \frac{1}{13a^2})$ for any $\tau$.
	\end{proposition}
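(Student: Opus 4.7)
The plan is to reduce uniform continuity of the family $g_t=f_t^2\eta_0^2+h_t^2 g_{\cD_0}$ (measured, say, by the sup over $M$ of the pointwise $g_0$-norm of $g_t-g_s$) to uniform continuity of the two scalar coefficient functions $t\mapsto f_t^2$ and $t\mapsto h_t^2$. Indeed, writing $g_t-g_s=(f_t^2-f_s^2)\eta_0^2+(h_t^2-h_s^2)g_{\cD_0}$, the $g_0$-norm of the right-hand side is pointwise controlled by a multiple of $|f_t^2-f_s^2|+|h_t^2-h_s^2|$, and these two scalar functions depend only on $t$, so everything reduces to an elementary real-variable question.

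For the first claim, fix $\tau<\tfrac{1}{13a^2}$ and set $u(t):=1-13a^2t$, so $u(t)\geq u(\tau)>0$ on $(-\infty,\tau]$. From \eqref{eq:fhfinal} one computes
\begin{equation*}
\ddt f_t^2 = 3a^4\, u(t)^{-16/13},\qquad \ddt h_t^2 = -5a^2\, u(t)^{-8/13}.
\end{equation*}
Since both exponents are negative and $u$ is bounded below by $u(\tau)$ on $(-\infty,\tau]$, both derivatives are bounded by constants depending only on $a$ and $\tau$. Hence $f_t^2$ and $h_t^2$ are Lipschitz on $(-\infty,\tau]$, in particular uniformly continuous, and the reduction above gives uniform continuity of $g_t$ on that interval.

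For the second claim, fix any $\tau<\tfrac{1}{13a^2}$ and recall that $f_t^2=a^2 u(t)^{-3/13}$, so $f_t^2\to\infty$ as $t\nearrow \tfrac{1}{13a^2}$. A uniformly continuous scalar function on a bounded interval $[\tau,\tfrac{1}{13a^2})$ must extend continuously to the closed interval and is therefore bounded, which contradicts the blow-up of $f_t^2$. Consequently $f_t^2$ is not uniformly continuous on $[\tau,\tfrac{1}{13a^2})$, and testing $g_t-g_s$ against the unit vector field $\xi_0=\eta_0^\sharp$ (with respect to $g_0$) shows that $\|g_t-g_s\|_{g_0}\geq |f_t^2-f_s^2|$ pointwise, so the failure of uniform continuity of $f_t^2$ forces the failure of uniform continuity of the metric family.

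No serious obstacle is expected; the only subtle point is verifying that the non-compactness of $(-\infty,\tau]$ does not cause trouble, which is handled by the observation that the derivatives $\tfrac{d}{dt}f_t^2$ and $\tfrac{d}{dt}h_t^2$ actually \emph{decay} as $t\to-\infty$ (since $u(t)\to\infty$ and both exponents are negative), so global Lipschitz bounds on $(-\infty,\tau]$ indeed hold.
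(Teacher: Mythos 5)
Your proof is correct and follows essentially the same route as the paper: reduce uniform continuity of $g_t$ to that of the scalar coefficients $f_t^2=a^2(1-13a^2t)^{-3/13}$ and $h_t^2=(1-13a^2t)^{5/13}$, since $\eta_0^2$ and $g_{\cD_0}$ are fixed tensors. You supply more detail than the paper does (the Lipschitz bound on $(-\infty,\tau]$ via the decaying derivatives, and the blow-up/boundedness contradiction on $[\tau,\tfrac{1}{13a^2})$), and all of those computations check out.
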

	
	\begin{proof}
		The proof follows from the expression of the metrics $g_t$ given in \eqref{eq:rhfsol} and noticing that $\eta_0^2$ and $g_{\cD_0}$ are fixed tensors so any behaviour of $g_t$ depends only on $t$ and hence the uniform continuity of the metrics $g_t$ depend on the uniform continuity of the functions $(1-13a^2t)^{-3/13}$ and $(1-13a^2t)^{5/13}$ on any sub-interval $I$ of the interval $(-\infty, \frac{1}{13a^2})$. The result follows.
	\end{proof}
	
	\subsection{Ricci-like flows in \cite{panos-georgeG2Hilbert}}
	
	We now provide explicit solutions to the flows \eqref{eq:pgflw1}-\eqref{eq:pgflw2} which were introduced in \cite{panos-georgeG2Hilbert}. For the flow defined in \eqref{eq:pgflw1} a family of $\G2$-structures $\g2_t$ with initial value $\g2_0$ is a solution of the flow if is satisfies
	\begin{align*}
		\ptt \g2_t &= \left(-\Ric_t -\frac 23 (T_t\circ (\Vop{T_t}\lrcorner \g2_t))_{\text{sym}} + \tr T_t(T_t)_{\text{sym}} \right) \diamond \g2_t + \left(\Div_t T_T +\frac 13\tr T_t \Vop{T_t} -\frac 13 T_t^t(\Vop{T_t}) \right)\lrcorner \psi_T.
	\end{align*}
	
	For the family of $\G2$-structures $\g2_t$ in \eqref{eq:phit} we have that $\Div_tT_t=0$ and since $\g2_t$ is co-closed for all $t$, $\Vop T$ which is proportional to $(\tau_1)_t$ also vanishes. Moreover \eqref{eq:Tt} implies that $T_t$ is symmetric in this case and thus the family $\g2_t$ in \eqref{eq:phit} is a solution of the variational flow if
	\begin{align}\label{eq:vf1phit}
		\ptt \g2_t &= \left(-\Ric_t + \tr T_t(T_t) \right) \diamond \g2_t
	\end{align}
	
	The $2$-tensor $-\Ric_t + \tr T_t(T_t)$ can be easily computed using \eqref{eq:Rict}, \eqref{eq:Tt}, and \eqref{eq:traceT} and is given by
	\begin{align*}
		-\Ric_t + \tr T_t(T_t)&= -\frac{15f_t^4}{4h_t^4}\eta_0^2+\frac{5f_t^2}{4h_t^2}g_{\cD_0}= -\frac{5f_t^4}{h_t^4}\eta_0^2+\frac{5f_t^2}{4h_t^4}g_{t}.
	\end{align*}
	
	Now we can compute the right hand side of the expression in \eqref{eq:vf1phit} by \eqref{eq:etadiamondphit} and the fact that $g_t\diamond\g2_t=3\g2_t$. We get that 
	\begin{align}\label{eq:diamforRl}
		(-\Ric_t + \tr T_t(T_t))\diamond\g2_t&= -\frac{5f_t^3}{h_t^2}\eta_0\wedge\omega_0+\frac{15f_t^2}{4h_t^4}\g2_{t} =  -\frac{5f_t^3}{4h_t^2}\eta_0\wedge\omega_0+\frac{15f_t^2}{4h_t} {\rm{Re}}\Upsilon_0.
	\end{align}
	As a result, the ODEs for $h_t$ and $f_t$ are
	\begin{align}\label{eq:pgflw1ODE}
		\ddt (f_th_t^2)=\frac{-5f_t^3}{4h_t^2},\ \ \ \ddt(h_t^3)=\frac{15f_t^2}{4h_t},\ \ f_0=a, h_0=1.
	\end{align}
	We can solve the system of ODEs in \eqref{eq:pgflw1ODE} in exactly the same manner as in the proof of \Cref{thm:RHFcCY} and obtain
	\begin{align*}
		f_t = a\left(1+\frac{25a^2}{2}t \right)^{-\frac{3}{10}},\ \ h_t=\left(1+\frac{25a^2}{2}t \right)^{\frac{1}{10}},
	\end{align*}
	which gives the solution $\g2_t$ with $t\in \left(-\frac{2}{25a^2}, \infty \right)$. Thus, the solution of the flow \eqref{eq:pgflw1} is immortal,
	with a finite time singularity (backwards in time) at $t=-\frac{2}{25a^2}$. This discussion proves the following,
	
	\begin{theorem}\label{thm:pgflowcCY}
		Let $(M^7, \g2_0)$ be a $7$-dimensional contact Calabi-Yau manifold with the setup \eqref{eq: standard setup}. The flow in \eqref{eq:pgflw1} starting with $\g2_0$ is explicitly solved by the family of co-closed $\G2$-structures given by
		\begin{align}\label{eq:pgflw1sol}
			\begin{split}
				\varphi_t &= f_th_t^2\eta_0\wedge \omega_0+h_t^3{\rm{Re}} \Upsilon_0, \\
				\psi_t &=\frac{1}{2}h_t^4\omega_0^2-f_th_t^3\eta_0\wedge{\rm{Im}}\Upsilon_0,
			\end{split}
		\end{align}
		where $h_t = \left(1+\frac{25a^2}{2}t \right)^{\frac{1}{10}},\ \ f_t=a\left(1+\frac{25a^2}{2}t \right)^{-\frac{3}{10}}$. The solution $\g2_t$ is an immortal solution of the flow with $t\in \left(-\frac{2}{25a^2}, \infty\right)$ (but is not eternal) and the flow has a finite time singularity, which is backwards in time, at $t=-\frac{2}{25a^2}$. If, $M$ is compact, then the solution $\g2_t$ is the unique solution starting with $\g2_0$.  
	\end{theorem}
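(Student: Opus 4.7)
The plan is to proceed in close parallel with the proof of \Cref{thm:RHFcCY}. First I would observe that the ansatz \eqref{eq:phit} is co-closed for every $t$, which forces $(\tau_1)_t = 0$ and $(\tau_2)_t = 0$ and therefore makes $\Vop{T_t}$ vanish (since the vector part of the torsion is proportional to $(\tau_1)_t$) and $T_t$ symmetric, by \eqref{eq:torsion.coclosed}. The same argument from \cite{lotay-saearp-saavedra} that was invoked in the Ricci-harmonic proof (the Reeb field preserves the family, $(\del_t)_X g_{\cD_0}$ vanishes, and the only surviving component of $\del_t T_t$ lies orthogonal to $\cD_0$) gives $\Div_t T_t = 0$ as well. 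With $\Vop{T_t} = 0$, $T_t^t = T_t$, and $\Div_t T_t = 0$, every term in \eqref{eq:pgflw1} that is contracted with $\psi_t$ vanishes, as does $(T_t \circ (\Vop{T_t}\lrcorner \g2_t))_{\text{sym}}$, and the flow collapses to
\begin{align*}
\ptt \g2_t = \bigl(-\Ric_t + \tr T_t\, T_t\bigr) \diamond \g2_t.
\end{align*}

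Next I would substitute the explicit tensors from \Cref{prop:Tt_ccY}, \Cref{prop:Rict_ccY}, and \eqref{eq:traceT} into the coefficient $-\Ric_t + \tr T_t\, T_t$, rewrite it as a combination of $\eta_0^2$ and $g_t$ so that $g_t \diamond \g2_t = 3\g2_t$ may be applied, and handle the remaining $\eta_0^2 \diamond \g2_t$ term via \eqref{eq:etadiamondphit}. Expanding $\ptt \g2_t = \ddt(f_t h_t^2)\,\eta_0 \wedge \omega_0 + \ddt(h_t^3)\,{\rm Re}\Upsilon_0$ and matching coefficients in the linearly independent pair $\{\eta_0 \wedge \omega_0,\, {\rm Re}\Upsilon_0\}$ yields the coupled ODE system \eqref{eq:pgflw1ODE} with $f_0 = a$, $h_0 = 1$.

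To integrate the system I would use the same separation-of-variables device as for \eqref{eq:rhfODE}: eliminate $\ddt(f_t h_t^2)$ using the $h$-equation to extract an uncoupled relation of the form $f'/f = c\, h'/h$ for an explicit constant $c$, integrate with the initial conditions to write $f$ as a power of $h$, and substitute back to reduce to a single scalar ODE in $h$ of the form $h' = \kappa\, h^{-m}$. Integration then delivers $h_t = (1 + \tfrac{25 a^2}{2} t)^{1/10}$ and $f_t = a(1 + \tfrac{25 a^2}{2} t)^{-3/10}$, and the maximal existence interval $(-\tfrac{2}{25a^2}, \infty)$ together with the backward-in-time singularity at $t = -\tfrac{2}{25 a^2}$ is read off from the sign of $1 + \tfrac{25 a^2}{2} t$. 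I do not expect the computation itself to be a serious obstacle; the only delicate point is the uniqueness claim on compact $M$, since, as the authors note, a short-time existence and uniqueness theorem for \eqref{eq:pgflw1} analogous to \cite[Thm. 3.7]{dwivedi-rhf} is not yet established. I would therefore either interpret the uniqueness as uniqueness within the ansatz class (where it is immediate from standard ODE theory) or appeal to the parabolic nature of \eqref{eq:pgflw1} modulo the diffeomorphism action, in direct analogy with the Ricci-harmonic case.
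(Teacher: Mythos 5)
Your proposal follows essentially the same route as the paper: reduce \eqref{eq:pgflw1} to $\ptt\g2_t=(-\Ric_t+\tr T_t\,T_t)\diamond\g2_t$ using $\Div_tT_t=0$, $\Vop{T_t}=0$ and the symmetry of $T_t$, compute the coefficient tensor from \Cref{prop:Tt_ccY}, \Cref{prop:Rict_ccY} and \eqref{eq:traceT}, match coefficients against $\{\eta_0\wedge\omega_0,\ {\rm Re}\Upsilon_0\}$ to obtain \eqref{eq:pgflw1ODE}, and integrate by the same separation of variables as for \eqref{eq:rhfODE}. The only small divergence is the uniqueness claim, where the paper simply cites \cite[Thm. 6.76]{dgk-flows} rather than restricting to uniqueness within the ansatz class; otherwise your argument matches the paper's.
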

	The uniqueness of the solutions on compact manifolds follows from \cite[Thm. 6.76]{dgk-flows}.\qed
	
	\begin{remark}
		If we take $a\rightarrow 0$ then the solution \eqref{eq:pgflw1sol} can be interpreted as giving degenerate \emph{eternal} solutions to flow \eqref{eq:pgflw1} given by the Calabi-Yau distribution $\cD_0$. On the contrary, if we take $a\rightarrow \infty$ then we get degenerate solution to the flow which is only defined for non-negative times. \demo 
	\end{remark}
	
	\begin{remark}
		From the expression of the volume form in \eqref{eq:rhfsol}, we see that for the flow \eqref{eq:pgflw1},
		\begin{align*}
			\vol_t = a\left(1+\frac{25a^2}{2}t\right)^{\frac{3}{10}},
		\end{align*}
		and hence the volume form is \emph{increasing} in time and hence when $M$ is compact then its volume is strictly increasing in time with
		\begin{align*}
			\Vol(M, g_t) \rightarrow \infty\ \ \ \ \ \text{as}\ \ \ \ t\rightarrow \infty.
		\end{align*}
		\demo
	\end{remark}
	
	\medskip
	
	\begin{remark}
		We see from the expression in \eqref{eq:diamforRl} that the velocity of the flow has components in both the fibre directions and also on the Calabi-Yau distribution $\cD_0$. Again, this is different from the cases considered in \cite{lotay-saearp-saavedra}. \demo
	\end{remark}
	
	Even though, there is no analogue of \Cref{thm:rhflte} for the flow in \eqref{eq:pgflw1} yet, based on the analysis of various other flows of $\G2$-structures, it is reasonable to expect that the quantity $\Lambda(t)$ from \eqref{eq:Lambdadefn} controls the maximal existence time of the flow in \eqref{eq:pgflw1}. So if we assume this aspect, then on compact $7$-dimensional cCY manifolds we get the following result for the solution $\g2_t$ in \eqref{eq:pgflw1sol}.
	
	\begin{proposition}
		In the setup \eqref{eq: standard setup}, if $M$ is compact, then the solution $\g2_t$ in \eqref{eq:pgflw1sol} has a Type IIb infinite time singularity unless $g_{\cD_0}$ is flat, in which case the singularity at infinite time is of Type III.
	\end{proposition}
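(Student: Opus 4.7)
The plan is to mimic the analysis of $\Lambda(t)$ performed in the proof of \Cref{thm:RHFcCY}, but now with the explicit $f_t, h_t$ from \eqref{eq:pgflw1sol}. The conceptual point is that, under the assumed analogue of \Cref{thm:rhflte}, classifying the infinite-time singularity reduces to determining the growth rate of $\Lambda(t)$ as $t\to\infty$. The key observation that makes the calculation quick is that the expressions for $|\Rm_t|^2_{g_t}$, $|T_t|^2_{g_t}$, and $|\del_t T_t|^2_{g_t}$ cited from \cite{lotay-saearp-saavedra} were derived purely from the ansatz in \eqref{eq:phit} and are therefore valid for any choice of the functions $f_t, h_t$; only their explicit $t$-dependence changes.

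First I would record, as in the proof of \Cref{thm:RHFcCY}, that
\begin{align*}
|\Rm_t|^2_{g_t} = \frac{1}{h_t^4}|\Rm_0^{\cD_0}|^2_{g_0} + c_0\,\frac{f_t^4}{h_t^8}, \qquad |T_t|^4_{g_t} = \frac{225}{16}\,\frac{f_t^4}{h_t^8}, \qquad |\del_t T_t|^2_{g_t} = c_0'\,\frac{f_t^4}{h_t^8},
\end{align*}
with $c_0, c_0' > 0$ explicit constants. Substituting $f_t = a\bigl(1+\tfrac{25 a^2}{2}t\bigr)^{-3/10}$ and $h_t = \bigl(1+\tfrac{25 a^2}{2}t\bigr)^{1/10}$ gives the clean identities $h_t^{-4} = \bigl(1+\tfrac{25 a^2}{2}t\bigr)^{-2/5}$ and $f_t^4 h_t^{-8} = a^4\bigl(1+\tfrac{25 a^2}{2}t\bigr)^{-2}$.

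Setting $K := \sup_M |\Rm_0^{\cD_0}|_{g_0}$ and collecting the three terms would then yield
\begin{align*}
\Lambda(t)^2 \;=\; K^2\Bigl(1+\tfrac{25 a^2}{2}t\Bigr)^{-2/5} \;+\; C\,a^4\Bigl(1+\tfrac{25 a^2}{2}t\Bigr)^{-2},
\end{align*}
for a positive constant $C$ absorbing $c_0, c_0'$ and $225/16$. From here the claim falls out by inspection. If $g_{\cD_0}$ is flat then $K=0$, so $\Lambda(t) \asymp (1+t)^{-1}$ and $\sup_{t \in [0,\infty)} t\,\Lambda(t) < \infty$, giving a Type III singularity in the sense of \Cref{def:sing.types}. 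If $g_{\cD_0}$ is not flat then $K>0$ and the first summand dominates for large $t$, so $\Lambda(t) \asymp K(1+t)^{-1/5}$ and $t\,\Lambda(t) \to \infty$, giving a Type IIb singularity.

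There is essentially no analytic obstacle in this argument: all quantities entering $\Lambda(t)$ reduce to explicit rational powers of $\bigl(1+\tfrac{25a^2}{2}t\bigr)$, and the dichotomy between Type IIb and Type III is governed precisely by whether the transverse curvature term survives. The only subtlety worth flagging is the conditional nature of the statement, which hinges on the assumed extension of \Cref{thm:rhflte} to the flow \eqref{eq:pgflw1}; once such a characterization of the maximal existence time through $\Lambda(t)$ is in place, the calculation above completes the proof.
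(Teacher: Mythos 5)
Your proposal is correct and follows essentially the same route as the paper: substitute the explicit $f_t, h_t$ from \eqref{eq:pgflw1sol} into the curvature and torsion expressions \eqref{eq:Riemrhf}--\eqref{eq:nablaTrhf} to obtain $\Lambda(t)^2 = K^2\bigl(1+\tfrac{25a^2}{2}t\bigr)^{-2/5} + c_0 a^4\bigl(1+\tfrac{25a^2}{2}t\bigr)^{-2}$, and then read off the Type IIb/Type III dichotomy from \Cref{def:sing.types} according to whether $K=\sup_M|\Rm_0^{\cD_0}|_{g_0}$ vanishes. Your explicit remark that these formulas are valid for arbitrary $f_t,h_t$ in the ansatz, and your flagging of the conditional dependence on an analogue of \Cref{thm:rhflte}, match the paper's (more tersely stated) reasoning.
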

	\begin{proof}
		The result follows from computing the expression for $\Lambda(t)$ by using $f_t$, $h_t$ and \eqref{eq:Riemrhf}, \eqref{eq:normTrhf} and \eqref{eq:nablaTrhf}. In fact, we have
		\begin{align}
			\Lambda(t) = \left(K^2(1+\frac{25a^2}{2}t)^{-\frac{2}{5}}+c_0a^4(1+\frac{25a^2}{2}t)^{-2} \right)^{\frac 12}
		\end{align}
		where again we let $K=\text{sup}_M |\Rm_0^{\cD_0}|_{g_0} $. The assertions on the type of singularity now follows from \Cref{def:sing.types}.
	\end{proof}
	
	\begin{remark}
		If a result like \Cref{thm:rhflte} is true for the Ricci-like flow \eqref{eq:pgflw1} then $\g2_t$ in \eqref{eq:pgflw1sol} will be the first compact examples of infinite time singularity of the flow which are Type IIb and Type III.
	\end{remark}

	For the flow in \eqref{eq:pgflw2}, we can compute
	\begin{align*}
		-\Ric_t + \tr T_t(T_t) +\frac{|T_t|^2}{3}g_t = -\frac{5f_t^4}{h_t^4}\eta_0^2+\frac{5f_t^2}{2h_t^4}g_{t}.
	\end{align*}
	Thus using \eqref{eq:etadiamondphit} and $g_t\diamond\g2_t=3\g2_t$ we get that
	\begin{align*}
		\left(-\Ric_t + \tr T_t(T_t) +\frac{|T_t|^2}{3}g_t\right) \diamond \g2_t&= \frac{5f_t^3}{2h_t^2}\eta_0\wedge\omega_0+\frac{15f_t^2}{2h_t} {\rm{Re}}\Upsilon_0,
	\end{align*}
	and hence the corresponding ODEs, are 
	\begin{align*}
		\ddt(f_th_t^2)=\frac{5f_t^3}{2h_t^2},\ \ \ddt(h_t^3)=\frac{15f_t^2}{2h_t},\ \ f_0=a, h_0=1,
	\end{align*}
	which gives the solutions
	\begin{align*}
		f_t = a(1+15a^2t)^{-\frac 16},\ \ h_t=(1+15a^2t)^{\frac 16},
	\end{align*}
	and so we again get immortal solutions of the flow. 
	
\medskip

We now look at the behaviour of the $\G2$-Einstein-Hilbert functional along the flow $\g2(t)$, since the flows \eqref{eq:pgflw1} and \eqref{eq:pgflw2} are obtained by looking at the variation of the functional\footnote{We thank Panagiotis Gianniotis for suggesting this.}. The $\G2$-Einstein-Hilbert functional was defined in \cite{panos-georgeG2Hilbert} and is given by
\begin{align*}
\cF(\g2) = \int_M \left(\frac 16 R-\frac 13 |T|^2 - \frac 16 (\tr T)^2 \right)\vol,
\end{align*} 
where $R$ is the scalar curvature of the underlying metric $g_{\g2}$. It can also be re-written as
\begin{align*}
\cF(\g2) = \int_M \left(-\frac 12 |T|^2+\frac 16 |\Vop{T}|^2\right)\vol.
\end{align*}
Since along the family $\g2_t$ in \eqref{eq:phit}, $\Vop_t(T_t)=0$, the functional reduces to
\begin{align}\label{eq:G2EH}
\cF(\g2_t) = \int_M-\frac 12 |T_t|^2 \vol_t. 
\end{align}
Using \eqref{eq:normTsquare} and the expression of $\vol_t$ from \eqref{eq:rhfsol}, we see that
\begin{align*}
\cF(\g2_t)&= -\frac{15}{8}\int_M a^{2}\left(1+\frac{25a^2}{2}t\right)^{-\frac 75}a\left(1+\frac{25a^2}{2}t\right)^{\frac{3}{10}} \eta_0\wedge \vol_{\cD_0} \\
&= -\frac{1}{2}\left(1+\frac{25}{a^2}t\right)^{-\frac{11}{10}} \cF(\g2_0).
\end{align*}
Thus, the $\G2$-Einstein-Hilbert functional goes to $0$ as $t\rightarrow \infty$ and tends to $\infty$ as $t\rightarrow -\frac{2}{25a^2}$.

\subsection{Negative gradient flow of an energy functional}\label{subsec:ngf}

We now analyze the negative gradient flow of the energy functional which is $L^2$-norm of the torsion on contact Calabi-Yau manifolds with our ansatz \eqref{eq:phit}. Recall that a natural energy functional on the space $\Omega^3_{+}(M)$ of $\G2$-structures is 
\begin{align*}
E(\g2) = \frac 12\int_M |T|^2\vol.
\end{align*}
It is known, for instance see \cite[Corr. 5.10]{dgk-flows}, that if $\ptt \g2=h\diamond \g2 +X\lrcorner \psi$ on a compact manifold $M$, then the variation of $E$ is given by
\begin{align*}
\frac 12\ptt\int_M |T|^2\vol = \int_M \left\langle \Ric +\frac 12\cL_{\Vop{T}}g + \frac 12|T|^2g-(\tr T)T_{\text{sym}}+T^2_{\text{sym}}-TT^t-(T(PT))_{\text{sym}}, h\right \rangle \vol + \int_M \left\langle \Div T, X\right\rangle \vol,
\end{align*}
where $(PT)_{ij}= T_{ab}\psi_{abij}$. Thus, the negative gradient flow of the energy functional $E$ is the following flow of $\G2$-structures, 
\begin{align}\label{eq:neggradflow}
\ptt \g2 = \left(-\Ric -\frac 12\cL_{\Vop{T}}g - \frac 12|T|^2g+(\tr T)T_{\text{sym}}-T^2_{\text{sym}}+TT^t-(T(PT))_{\text{sym}}\right) + \Div T\lrcorner \psi.
\end{align}

Since we are interested in the behaviour of the flow along the family $\g2_t$ in \eqref{eq:phit}, along which the $\G2$-structures are co-closed hence $T$ is symmetric and $\tr T$ is constant and thus,
\begin{align*}
\Div T_t=0, \ \Vop_t{T_t}=0, \ (\tr T)T_{\text{sym}}=(\tr T)T, \ -T^2_{\text{sym}}=-T^2, \ PT=0, \ \text{and}\  TT^t=T^2.
\end{align*}
Consequently, the flow in \eqref{eq:neggradflow}, for the family $\g2_t$ in \eqref{eq:phit}, reduces to
\begin{align}\label{eq:neggradphit}
\ptt \g2_t = \left(-\Ric -\frac 12|T|^2g+(\tr T)T\right)\diamond \g2_t.
\end{align}

Proceeding in the same way as in the previous two subsections, we get
\begin{align*}
\left(-\Ric_t -\frac 12|T_t|^2g_t+(\tr T_t)T_t\right)\diamond_t \g2_t = -\frac{27f_t^3}{8h_t^2}\eta_0\wedge \omega_0 - \frac{27f_t^2}{8h_t}\rm{Re} \Upsilon_0
\end{align*}
and as a result, the ODEs for the functions $f_t$ and $h_t$ are
\begin{align}\label{eq:odeneggrad}
\ddt(f_th_t^2)=-\frac{27f_t^3}{8h_t^2},\ \ \ \ddt(h_t^3)=-\frac{27f_t^2}{8h_t},\ \ \ f_0=a,\ h_0=1.
\end{align}
We can again solve the ODEs using the separation of variables as before and as a result, the solution for \eqref{eq:odeneggrad} are given by
\begin{align*}
f_t=a\left(1-\frac{9a^2}{4}t\right)^{\frac 12}, \ \ \ \ h_t=\left(1-\frac{9a^2}{4}t\right)^{\frac 12}.
\end{align*}
The solution $\g2_t$ exists for all $t\in (-\infty, \frac{4a^2}{9}]$ and has a finite time singularity at $t=\frac{4a^2}{9}$. This discussion proves the following 

\begin{theorem}\label{thm:ngflowcCY}
	Let $(M^7, \g2_0)$ be a $7$-dimensional contact Calabi-Yau manifold with the setup \eqref{eq: standard setup}. The flow in negative gradient flow of the energy functional \eqref{eq:neggradflow} starting with $\g2_0$ is explicitly solved by the family of co-closed $\G2$-structures given by
	\begin{align}\label{eq:ngflw1sol}
		\begin{split}
			\varphi_t &= f_th_t^2\eta_0\wedge \omega_0+h_t^3{\rm{Re}} \Upsilon_0, \\
			\psi_t &=\frac{1}{2}h_t^4\omega_0^2-f_th_t^3\eta_0\wedge{\rm{Im}}\Upsilon_0,
		\end{split}
	\end{align}
	where $h_t = \left(1-\frac{9a^2}{4}t \right)^{\frac{1}{2}},\ \ f_t=a\left(1-\frac{9a^2}{4}t \right)^{\frac{1}{2}}$. The solution $\g2_t$ is an ancient solution of the flow with $t\in \left(-\infty, \frac{4}{9a^2}\right)$ (but is not eternal) and the flow has a finite time singularity at $t=\frac{4}{9a^2}$. If, $M$ is compact, then the solution $\g2_t$ is the unique solution starting with $\g2_0$.  
\end{theorem}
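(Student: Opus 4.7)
The plan is to follow the same template used in the proofs of \Cref{thm:RHFcCY} and \Cref{thm:pgflowcCY}: first reduce the general flow equation \eqref{eq:neggradflow} to an ODE system for the two scalar functions $f_t,h_t$ in the ansatz \eqref{eq:phit}, and then solve that system explicitly by separation of variables. The simplifying observation that makes this tractable is that the ansatz $\g2_t$ in \eqref{eq:phit} consists of co-closed $\G2$-structures for all $t$, so $(\tau_1)_t = (\tau_2)_t = 0$, the torsion $T_t$ in \Cref{prop:Tt_ccY} is symmetric, $\Vop_t T_t = 0$, $PT_t = 0$, and (from \cite[Prop. 3.7]{lotay-saearp-saavedra}) $\Div_t T_t = 0$. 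Consequently \eqref{eq:neggradflow} collapses to \eqref{eq:neggradphit}, with driving tensor $-\Ric_t - \tfrac{1}{2}|T_t|^2 g_t + (\tr T_t)\, T_t$.

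Next I would compute this $2$-tensor using the already-established formulas \eqref{eq:Rict}, \eqref{eq:Tt}, \eqref{eq:normTsquare}, and \eqref{eq:traceT}, writing it in the form $A\,\eta_0^2 + B\, g_{\cD_0}$. To turn the $\diamond$-action into a concrete $3$-form, I would use $g_t \diamond \g2_t = 3\g2_t$ together with the identity \eqref{eq:etadiamondphit}, which expresses $\eta_0^2 \diamond \g2_t$ as a multiple of $\eta_0 \wedge \omega_0$. The result is the expression
\begin{equation*}
\left(-\Ric_t -\tfrac{1}{2}|T_t|^2 g_t + (\tr T_t)\, T_t\right)\diamond \g2_t = -\tfrac{27 f_t^3}{8 h_t^2}\,\eta_0\wedge\omega_0 - \tfrac{27 f_t^2}{8 h_t}\,\mathrm{Re}\,\Upsilon_0,
\end{equation*}
which, matched against $\ptt \g2_t = \ddt(f_t h_t^2)\,\eta_0\wedge\omega_0 + \ddt(h_t^3)\,\mathrm{Re}\,\Upsilon_0$, yields the ODE system \eqref{eq:odeneggrad}.

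To integrate the system, I would expand $\ddt(f_t h_t^2) = f_t' h_t^2 + 2 f_t h_t h_t'$ and substitute the expression for $h_t'$ from the second ODE to isolate $f_t'$; a short calculation should give a relation $f_t'/f_t = c\, h_t'/h_t$ for some constant $c$, which integrates with $f_0=a,h_0=1$ to an algebraic relation between $f_t$ and $h_t$. Substituting this back gives a single ODE for $h_t$ that solves by separation of variables. I expect the resulting explicit formulas to be $h_t = f_t/a = (1 - \tfrac{9a^2}{4}t)^{1/2}$, which is precisely the claim; one then reads off that the solution is defined on $(-\infty,\tfrac{4}{9a^2})$ and that a finite-time singularity forms at $t = \tfrac{4}{9a^2}$ where $h_t \to 0$. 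Uniqueness on compact $M$ will follow directly from the short-time existence and uniqueness theorem for the negative gradient flow of $E$ (e.g.\ \cite[Thm. 6.76]{dgk-flows}).

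The only mild obstacle I foresee is the bookkeeping of the numerical coefficients when assembling $-\Ric_t - \tfrac{1}{2}|T_t|^2 g_t + (\tr T_t) T_t$: the $\eta_0^2$ and $g_{\cD_0}$ components come with fractional coefficients from three separate tensors, and one must carefully rewrite the sum as $A(f_t,h_t)\,\eta_0^2 + \tfrac{B(f_t,h_t)}{h_t^2}\,g_t$ before taking $\diamond \g2_t$, since the cleanest algebra uses $g_t \diamond \g2_t = 3\g2_t$ rather than handling $g_{\cD_0}\diamond \g2_t$ directly. Once the coefficients $-27/8$ are in place, the rest is routine and parallels the two earlier proofs verbatim.
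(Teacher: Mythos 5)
Your proposal follows the paper's own proof essentially verbatim: the same reduction of \eqref{eq:neggradflow} to \eqref{eq:neggradphit} using co-closedness of the ansatz (so $T_t$ is symmetric, $\Vop{T_t}=0$, $PT_t=0$, and $\Div_t T_t=0$), the same assembly of the driving $2$-tensor from \eqref{eq:Rict}, \eqref{eq:Tt}, \eqref{eq:normTsquare} and \eqref{eq:traceT} followed by the $\diamond$-computation via \eqref{eq:etadiamondphit} and $g_t\diamond\g2_t=3\g2_t$, the same ODE system \eqref{eq:odeneggrad} integrated by separation of variables, and the same appeal to \cite[Thm.~6.76]{dgk-flows} for uniqueness on compact $M$. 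There is no substantive difference in approach, and the one step you defer (verifying the $-27/8$ coefficients) is exactly the bookkeeping the paper itself carries out.
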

The uniqueness of the solutions on compact manifolds follows from the result in \cite{weiss-witt} as well as \cite[Thm. 6.76]{dgk-flows}.\qed

Just like the Ricci-like flows in the previous section, there is no analogue of \Cref{thm:rhflte} for the flow in \eqref{eq:neggradflow} yet, since the flow again is a Ricci-like flow, it is reasonable to expect that the quantity $\Lambda(t)$ from \eqref{eq:Lambdadefn} controls the maximal existence time of the flow in \eqref{eq:neggradflow}. So if we assume this aspect, then on compact $7$-dimensional cCY manifolds we get the following result for the solution $\g2_t$ in \eqref{eq:ngflw1sol}.

\begin{proposition}
	In the setup \eqref{eq: standard setup}, if $M$ is compact, then the solution $\g2_t$ in \eqref{eq:ngflw1sol} has a finite time Type I singularity at $t=\frac{4}{9a^2}$.
\end{proposition}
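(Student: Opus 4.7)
The plan is to reuse the strategy from the Type I part of the proof of \Cref{thm:RHFcCY}: compute $\Lambda(t)$ from \eqref{eq:Lambdadefn} along the explicit solution \eqref{eq:ngflw1sol} and verify that $\sup_{t\in[0,\tau)}(\tau-t)\Lambda(t)<\infty$, where $\tau=\frac{4}{9a^2}$. Under the standing expectation that an analogue of \Cref{thm:rhflte} holds for \eqref{eq:neggradflow}, \Cref{def:sing.types} then yields the Type I conclusion.

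The key simplification peculiar to this flow is that $f_t=a(1-\frac{9a^2}{4}t)^{1/2}$ and $h_t=(1-\frac{9a^2}{4}t)^{1/2}$ satisfy $f_t/h_t\equiv a$, so $f_t^2=a^2 h_t^2$ for all $t$. Substituting this into \eqref{eq:normTsquare} I get $|T_t|^4_{g_t}=\tfrac{225}{16}a^4(1-\tfrac{9a^2}{4}t)^{-2}$. The expression for $|\Rm_t|^2_{g_t}$ recalled in the proof of \Cref{thm:RHFcCY}, namely $h_t^{-4}|\Rm_0^{\cD_0}|^2_{g_0}+c_0 f_t^4/h_t^8$, collapses to $\bigl(|\Rm_0^{\cD_0}|^2_{g_0}+c_0 a^4\bigr)(1-\tfrac{9a^2}{4}t)^{-2}$, and the analogous identity for $|\del_t T_t|^2_{g_t}$ collapses to $c_0'a^4(1-\tfrac{9a^2}{4}t)^{-2}$. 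Taking the supremum over $M$ and combining, one obtains $\Lambda(t)=C(a,K)(1-\tfrac{9a^2}{4}t)^{-1}$ for a fixed positive constant $C(a,K)$, where $K=\sup_M|\Rm_0^{\cD_0}|_{g_0}$.

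The last step is the algebraic identity $\tau-t=\frac{4}{9a^2}(1-\frac{9a^2}{4}t)$, which gives $(\tau-t)\Lambda(t)=\frac{4\,C(a,K)}{9a^2}$, identically constant in $t$ and hence bounded on $[0,\tau)$. This is precisely the Type I criterion in \Cref{def:sing.types}. There is essentially no analytic obstacle beyond the standing assumption that $\Lambda(t)$ governs the maximal existence time of \eqref{eq:neggradflow}; the verification is a purely algebraic rescaling. It is worth remarking that, unlike in the Ricci-like case \eqref{eq:pgflw1sol}, the flatness or non-flatness of the transverse Calabi--Yau distribution plays no role here, because the relation $f_t=ah_t$ forces the two contributions $h_t^{-4}$ and $f_t^4/h_t^8$ to $|\Rm_t|^2_{g_t}$ to blow up at the same rate as $|T_t|^4_{g_t}$ and $|\del_t T_t|^2_{g_t}$.
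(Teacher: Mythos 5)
Your proposal is correct and follows essentially the same route as the paper: substitute $f_t,h_t$ into \eqref{eq:Riemrhf}, \eqref{eq:normTrhf}, \eqref{eq:nablaTrhf} to get $\Lambda(t)$, and apply \Cref{def:sing.types} under the standing assumption that $\Lambda$ governs the maximal existence time. In fact your computation is the more careful one: since $h_t^{-4}=(1-\tfrac{9a^2}{4}t)^{-2}$ here, the curvature term blows up at the same rate as the torsion terms (the paper's printed exponent $-\tfrac{2}{5}$ appears to be carried over from the Ricci-like case), and your explicit check that $(\tau-t)\Lambda(t)$ is constant cleanly delivers the Type I conclusion.
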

\begin{proof}
	The result follows from computing the expression for $\Lambda(t)$ by using $f_t$, $h_t$ and \eqref{eq:Riemrhf}, \eqref{eq:normTrhf} and \eqref{eq:nablaTrhf}. In fact, we have
	\begin{align}
		\Lambda(t) = \left(K^2(1-\frac{9a^2}{4}t)^{-\frac{2}{5}}+c_0a^4(1-\frac{9a^2}{4}t)^{-2} \right)^{\frac 12}
	\end{align}
	where again we let $K=\text{sup}_M |\Rm_0^{\cD_0}|_{g_0} $. The assertions on the type of singularity now follows from \Cref{def:sing.types}.
\end{proof}

\begin{remark}
	If a result like \Cref{thm:rhflte} is true for the negative gradient flow of the energy functional \eqref{eq:neggradflow} then $\g2_t$ in \eqref{eq:ngflw1sol} will be the first compact examples of fine time singularity of the flow which are Type I.
\end{remark}

	\section{Solutions of the flows on the 7-dimensional Heisenberg group}\label{sec:solonheisenberg}
	In this section, we consider the solution of the Ricci-harmonic flow and the flows in \eqref{eq:pgflw1}, \eqref{eq:pgflw2}, and \eqref{eq:neggradflow} on the seven dimensional Heisenberg group $H$ which is a simply connected nilpotent Lie group. The reader is referred to the work \cite{bff-coflow} of Bagaglini--Fernández--Fino for more details on $\G2$-structures on $H$. We now describe a family of $\G2$-structures on the 7-dimensional Heisenberg group $H$. We recall that 
	
	\begin{definition}
		The Heisenberg Lie algebra $\frak{g}$ is a real $(2n+1)$-dimensional Lie algebra with two $n$-dimensional abelian sub-algebras $\eta_1,\eta_2$ and 1-dimensional subalgebra $\xi$ such that \begin{align*}
			\frak{h}&=\eta_1\oplus\eta_2\oplus\xi,\\
			[\eta_i,\eta_i]&=[\eta_i,\xi]=0, \ \ \ i=1,2.
		\end{align*}
		
		\noindent
		Moreover there exists basis $\eta_1={\rm{Span}}\{X_1,\ldots,X_n\}$, $\eta_2={\rm{Span}}\{Y_1,\ldots,Y_n\}$, and $\xi={\rm{Span}}\{Z\}$ for which
		\begin{align*}
			[X_i,Y_j]&=-\delta_{ij} Z \ \ \ \ \text{for all} \ 1\leq i,j\leq n.
		\end{align*}
	\end{definition}

	In dimension $7$ we choose a basis $e_1,\ldots,e_7$ of the Lie algebra $\frak{h}$ so the structure equations are given by 
	\[[e_1,e_2]=-e_7, [e_3,e_4]=-e_7 ,[e_5,e_6]=-e_7.\]If we denote by $\{e^i,i=1,\ldots,6, e^7\}$ the dual basis of $\frak{h}^*$, we can write
	\begin{align*}
		\frak{h}&= \left(0,0,0,0,0,0,e^{12}+e^{34}+e^{56}\right),
	\end{align*}
	which implies that 
	\begin{align}\label{eqn:diff_h7}
		de^i=0, i=1,\ldots,6, \ \ \ de^7=e^{12}+e^{34}+e^{56},
	\end{align}
	where $e^{ij}=e^i\wedge e^j$.
	
	\subsection{Ansatz for the flows}
	In the basis leading to \eqref{eqn:diff_h7}, we can define a left-invariant $\G2$-structure on $H$ by
	\begin{align}\label{eq:3formheisenberg}
		\g2_0&= e^{127}+e^{347}+e^{567}+ e^{135}-e^{146}-e^{236}-e^{245}.
	\end{align}
	The above $\G2$-structure defines the metric and orientation
	\begin{align*}
		g_0&= \sum_{i=1}^7 (e^i)^2,\\
		\vol_0&= e^1\wedge\ldots\wedge e^7.
	\end{align*}
	The $4$-form $\psi_0=*_0\varphi_0$ is given by
	\begin{align*}
		\psi_0&= e^{1234}+e^{3456}+e^{1256}+e^{1367}+e^{1457}+e^{2357}-e^{2467}.
	\end{align*}
	We can now define a family of left invariant $\G2$-structures parameterized by $t\in\mathbb{R}$ given by
	\begin{align}\label{eqn:phit_h7}
		\g2_t&= f_t(a_t^2e^{127}+b_t^2e^{347}+c_t^2e^{567})+a_t b_t c_t(e^{135}-e^{146}-e^{236}-e^{245}).
	\end{align}
	such that $f(0)=1, \ a(0)=b(0)=c(0)=1$.
	For this family, the metric and the orientation are given by
	\begin{align*}
		g_t&= a_t^2( (e^1)^2+(e^2)^2)+b_t^2( (e^3)^2+(e^4)^2)+c_t^2( (e^5)^2+(e^6)^2) + f_t^2 (e^7)^2,\\
		\vol_t&=f_ta_t^2b_t^2c_t^2 \vol_0.
	\end{align*}
	The $4$-form is
	\begin{align*}
		\psi_t&=*_t\g2_t=(a_t^2b_t^2 e^{1234}+b_t^2c_t^2 e^{3456}+a_t^2 c_t^2 e^{1256})+f_ta_tb_tc_t (e^{1367}+e^{1457}+e^{2357}-e^{2467}).
	\end{align*}
	We immediately observe that for $t=0$ the initial $\G2$-structure is given by $\g2_0$. 
	Using the differential relations in \eqref{eqn:diff_h7} we can compute the intrinsic torsion forms for this family by computing 
	\begin{align*}
		d\g2_t&= f_t((a_t^2+b_t^2)e^{1234}+(b_t^2+c_t^2)e^{3456}+(a_t^2+c_t^2)e^{1256}),\\
		d\psi_t&= 0.
	\end{align*}
	
	Thus this defines a family of co-closed $\G2$-structures and the only non-zero torsion forms from \eqref{eq:inttorsions} are $(\tau_0)_t,(\tau_3)_t$ which are given by
	\begin{align*}
		(\tau_0)_t&= \frac{2f_t (a_t^2b_t^2+a_t^2c_t^2+b_t^2c_t^2)}{7a_t^2b_t^2c_t^2},\\
		(\tau_3)_t&= \frac{f_t^2(5a_t^2(b_t^2+c_t^2)-2b_t^2c_t^2}{7a_t^2b_t^2}e^{127}+ \frac{f_t^2(5b_t^2(a_t^2+c_t^2)-2a_t^2c_t^2}{7a_t^2c_t^2}e^{347}+\frac{f_t^2(5c_t^2(a_t^2+b_t^2)-2a_t^2b_t^2}{7a_t^2c_t^2} e^{567})\\
		& \quad -\frac{2f_t(a_t^2b_t^2+a_t^2c_t^2+b_t^2c_t^2)}{7a_tb_tc_t}(e^{135}-e^{146}-e^{236}-e^{245}).
	\end{align*}
	Using \eqref{eq:torsion.coclosed} we can then compute the torsion tensor $T_t$ for $\g2_t$. 
	\begin{proposition}
		For the family of $\G2$-structures in \eqref{eqn:phit_h7} the torsion tensor $T_t$ is given by
		\begin{align}\label{eqn:Tt_h7}
			T_t&= \frac{f_t}{2} \sum_{i=1}^6 (e^i)^2 -\frac{f_t^3(a_t^2b_t^2+a_t^2c_t^2+b_t^2c_t^2)}{2a_t^2b_t^2c_t^2} (e^7)^2.
		\end{align}
		\qed
	\end{proposition}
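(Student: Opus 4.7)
My plan is to invoke the co-closed formula \eqref{eq:torsion.coclosed} and thereby reduce the computation to evaluating $j_{\g2_t}((\tau_3)_t)$. Since it was already shown that $d\psi_t = 0$, we have $(\tau_1)_t = (\tau_2)_t = 0$ and therefore
$$T_t \;=\; \frac{(\tau_0)_t}{4}\,g_t \;-\; \frac{1}{4}\,j_{\g2_t}((\tau_3)_t).$$
The first term is an immediate substitution of the known expressions for $(\tau_0)_t$ and $g_t$, producing a diagonal tensor with entries proportional to $a_t^2$, $b_t^2$, $c_t^2$, and $f_t^2$ in the respective coordinate blocks.

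For the second term, I would use linearity of $j_{\g2_t}$ and compute the map on the four basic 3-forms appearing in $(\tau_3)_t$: namely $e^{127}$, $e^{347}$, $e^{567}$, and $\sigma := e^{135} - e^{146} - e^{236} - e^{245}$. For each such $\rho$ and each pair $(e_i, e_j)$ of vectors from the dual basis, I would expand $(e_i \lrcorner \g2_t)\wedge(e_j \lrcorner \g2_t)\wedge \rho$ as a 7-form and then apply $*_t$ using $\vol_t = f_t a_t^2 b_t^2 c_t^2\, \vol_0$. To trim the work I would exploit the identity $j_{\g2_t}(\g2_t) = 6 g_t$: since $\g2_t = f_t a_t^2 e^{127} + f_t b_t^2 e^{347} + f_t c_t^2 e^{567} + a_t b_t c_t\, \sigma$, this relation eliminates one of the four computations (or serves as a built-in consistency check). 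By the block symmetry between the pairs $(1,2)$, $(3,4)$, $(5,6)$, each $j_{\g2_t}(\rho)$ is diagonal and supported on at most two of these blocks together with $(e^7)^2$, so only a handful of distinct entries must be computed.

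The main obstacle is the careful bookkeeping imposed by the anisotropic metric $g_t$: every application of $*_t$, every musical isomorphism, and every inverse metric factor brings in the weights $a_t, b_t, c_t, f_t$, and the coefficients of $(\tau_3)_t$ are themselves complicated rational expressions in these quantities. A useful sanity check during the simplification is that $(\tau_3)_t \in \Omega^3_{27}$, so $j_{\g2_t}((\tau_3)_t)$ must be $g_t$-traceless; this forces $\tr_{g_t}(T_t) = \tfrac{7}{4}(\tau_0)_t = \tfrac{f_t(a_t^2b_t^2 + a_t^2c_t^2 + b_t^2c_t^2)}{2 a_t^2 b_t^2 c_t^2}$, and a direct check shows this is exactly the trace of the claimed $T_t$ in \eqref{eqn:Tt_h7}. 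Assembling the two contributions and simplifying yields the asserted formula.
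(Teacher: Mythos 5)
Your proposal is correct and follows essentially the same route as the paper: the paper omits this proof, noting it is analogous to Proposition~\ref{prop:Tt_ccY}, and that analogue is exactly what you describe --- apply the co-closed formula \eqref{eq:torsion.coclosed} and evaluate $j_{\g2_t}((\tau_3)_t)$ by linearity on the basic $3$-forms together with $j_{\g2_t}(\g2_t)=6g_t$. Your traceless-ness check on $j_{\g2_t}((\tau_3)_t)$, which recovers $\tr T_t=\tfrac{7}{4}(\tau_0)_t$ in agreement with \eqref{eq:traceT_h7}, is a sound additional verification.
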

	
	\noindent
	Using the intrinsic torsion forms the Ricci tensor of $\g2_t$ can also be explicitly computed using Bryant's formula \eqref{eq:Riccitensor}.
	
	\begin{proposition}
		The Ricci tensor $\Ric_t$ for the family $\varphi_t$ in \eqref{eqn:phit_h7} is given by
		\begin{align}
			\label{eq:Rict_h7} \Ric_t&=  -\frac{f_t^2}{2a_t^2} ( (e^1)^2+(e^2)^2)- \frac{f_t^2}{2b_t^2} ( (e^3)^2+(e^4)^2)-\frac{f_t^2}{2c_t^2} ( (e^5)^2+(e^6)^2) + \frac{f_t^4(a_t^4b_t^4+a_t^4c_t^4+b_t^4c_t^4)}{2a_t^4b_t^4c_t^4} (e^7)^2
		\end{align}
	\end{proposition}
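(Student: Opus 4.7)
The plan is to apply Bryant's formula for the Ricci tensor of a co-closed $\G2$-structure, namely \eqref{eq:Riccicoclosed}, to the family $\g2_t$ in \eqref{eqn:phit_h7} using the intrinsic torsion forms $(\tau_0)_t$ and $(\tau_3)_t$ already computed above. Because the metric $g_t$ respects the block structure spanned by $(e^1,e^2)$, $(e^3,e^4)$, $(e^5,e^6)$ and $e^7$, and these blocks are permuted (after rescaling) by a symmetry of the ansatz, the resulting Ricci tensor must be diagonal with three pairs of equal entries plus a separate $(e^7)^2$ term, exactly as in the asserted expression \eqref{eq:Rict_h7}. Hence it is enough to read off four diagonal entries.

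The first step is to compute the scalar piece $-\bigl(-\tfrac38(\tau_0)_t^2+\tfrac12|(\tau_3)_t|_t^2\bigr)\,g_t$, which is direct algebra from the already-derived expressions for $(\tau_0)_t$ and $(\tau_3)_t$, remembering to use $g_t$ (and not $g_0$) to compute the norm $|(\tau_3)_t|_t^2$; each basis form $e^{ij}$ contributes a factor involving $a_t,b_t,c_t,f_t$ in a simple way. Next, using \eqref{eqn:diff_h7} I will compute $d(\tau_3)_t$: since $de^i=0$ for $i=1,\dots,6$ and $de^7=e^{12}+e^{34}+e^{56}$, only the three $e^{\,\cdot\,\cdot\,7}$ terms of $(\tau_3)_t$ contribute, producing a $4$-form in the $\{e^{ijkl}\,:\,i,j,k,l\le 6\}$ sector, which I then apply $*_t$ to in order to obtain a $3$-form supported in the $e^{\,\cdot\,\cdot\,7}$ and $\{e^{135},e^{146},e^{236},e^{245}\}$ sectors.

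The main obstacle will be the quadratic term $Q_t((\tau_3)_t,(\tau_3)_t)$ from \eqref{eq:Qmap}, since it is a sum of contractions of $\psi_t$ with wedge products of $(e_j\lrcorner e_i\lrcorner *_t(\tau_3)_t)$ and involves all four summands of $(\tau_3)_t$. To make this tractable I will work in the $g_t$-orthonormal frame $\tilde e^1=a_te^1,\dots,\tilde e^6=c_te^6,\tilde e^7=f_te^7$, in which $\g2_t$ and $\psi_t$ take the model forms used for $\g2_0$, $\psi_0$; the torsion forms $(\tau_0)_t$ and $(\tau_3)_t$ become simple rescalings whose $Q$ can be read off from the standard $\G2$ algebra. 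I will further exploit the cyclic symmetry $(a_t,b_t,c_t)\mapsto(b_t,c_t,a_t)$, which corresponds to a $\G2$-compatible permutation of blocks, so that only one representative in each of the four diagonal sectors needs to be computed in full; the others follow by symmetry.

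Finally I will compute the $j_{\g2_t}$-image of $\tfrac14*_t d(\tau_3)_t-\tfrac18(\tau_0)_t(\tau_3)_t+\tfrac1{64}Q_t((\tau_3)_t,(\tau_3)_t)$. For the middle term I will use the identity $j_{\g2_t}((\tau_3)_t)=(\tau_0)_t\,g_t-4T_t$, which follows from \eqref{eq:torsion.coclosed} and the expression for $T_t$ already given in \eqref{eqn:Tt_h7}; for the first and third terms I will compute $j_{\g2_t}$ on the basis $3$-forms $e^{127},e^{347},e^{567},e^{135},e^{146},e^{236},e^{245}$ using \eqref{eq:jmap}, which again becomes a standard computation in the orthonormal frame above. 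Adding the scalar piece to the $j_{\g2_t}$ piece and simplifying yields \eqref{eq:Rict_h7}.
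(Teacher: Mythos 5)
Your proposal is correct and takes essentially the same route the paper intends: the paper omits this proof, saying only that it is analogous to that of \Cref{prop:Rict_ccY}, which is precisely the application of Bryant's co-closed formula \eqref{eq:Riccicoclosed} to $(\tau_0)_t$ and $(\tau_3)_t$ that you lay out. Your organizational devices --- passing to the $g_t$-orthonormal frame, exploiting the cyclic block symmetry $(a_t,b_t,c_t)\mapsto(b_t,c_t,a_t)$, and using $j_{\g2_t}((\tau_3)_t)=(\tau_0)_t\,g_t-4T_t$ --- are all sound and merely streamline the computation (note only that $*_t d(\tau_3)_t$ in fact lands entirely in the $e^{\,\cdot\,\cdot\,7}$ sector, with zero component along $e^{135},e^{146},e^{236},e^{245}$).
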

	The proofs for the above propositions  are similar to the proof of Proposition~\ref{prop:Tt_ccY} and Proposition~\ref{prop:Rict_ccY} respectively and are hence omitted. We also record the following identities for later use.
	
	\begin{lemma}
		For the torsion tensor $T_t$ of the family $\g2_t$ in \eqref{eqn:phit_h7},
		\begin{align}
			\label{eq:TtranposeT_h7} T_t^tT_t&= \frac{f_t^2}{4a_t^2} ( (e^1)^2+(e^2)^2)+\frac{f_t^2}{4b_t^2} ( (e^3)^2+(e^4)^2)+\frac{f_t^2}{4c_t^2} ( (e^5)^2+(e^6)^2) + \frac{f_t^4(a_t^2b_t^2+a_t^2c_t^2+b_t^2c_t^2)^2}{4a_t^4b_t^4c_t^4} (e^7)^2,\\
			\label{eq:normTsquare_h7} |T_t|^2_{t}&=\frac{f_t^2 (2a_t^4b_t^4+2a_t^4c_t^4+2b_t^4c_t^4+(a_t^2b_t^2+a_t^2c_t^2+b_t^2c_t^2)^2)}{4a_t^4b_t^4c_t^4},\\
			\label{eq:traceT_h7} \tr T_t&=\frac{f_t(a_t^2b_t^2+a_t^2c_t^2+b_t^2c_t^2)}{2a_t^2b_t^2c_t^2}.
		\end{align}
	\end{lemma}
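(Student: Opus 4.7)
The strategy is to exploit the fact that in the left-invariant co-frame $\{e^1,\ldots,e^7\}$ on $H$, both the metric $g_t$ and the torsion tensor $T_t$ from \eqref{eqn:Tt_h7} are \emph{diagonal}, so every contraction involving these two tensors reduces to a sum of one-dimensional contributions indexed by $i=1,\ldots,7$. I would start by simply reading off the diagonal entries: $g_{11}=g_{22}=a_t^2$, $g_{33}=g_{44}=b_t^2$, $g_{55}=g_{66}=c_t^2$, $g_{77}=f_t^2$ (with $g^{ii}=1/g_{ii}$), and from \eqref{eqn:Tt_h7}, $T_{11}=\cdots=T_{66}=f_t/2$ together with $T_{77}=-f_t^3(a_t^2b_t^2+a_t^2c_t^2+b_t^2c_t^2)/(2a_t^2b_t^2c_t^2)$.

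With this bookkeeping in place, $\tr T_t=\sum_i g^{ii}T_{ii}$ splits into a horizontal part $2\cdot(f_t/2)(a_t^{-2}+b_t^{-2}+c_t^{-2})=f_t(a_t^2b_t^2+a_t^2c_t^2+b_t^2c_t^2)/(a_t^2b_t^2c_t^2)$ and a seventh component $T_{77}/f_t^2=-f_t(a_t^2b_t^2+a_t^2c_t^2+b_t^2c_t^2)/(2a_t^2b_t^2c_t^2)$; adding yields \eqref{eq:traceT_h7}. For the $2$-tensor $T_t^tT_t$, symmetry and diagonality of $T_t$ give $(T^tT)_{ii}=T_{ii}^2\,g^{ii}$, and plugging in the seven diagonal entries directly reproduces the right-hand side of \eqref{eq:TtranposeT_h7}.

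For $|T_t|^2_t=\sum_i T_{ii}^2(g^{ii})^2$, the six horizontal contributions add to
\begin{equation*}
\frac{f_t^2}{2a_t^4}+\frac{f_t^2}{2b_t^4}+\frac{f_t^2}{2c_t^4}
=\frac{f_t^2\bigl(2b_t^4c_t^4+2a_t^4c_t^4+2a_t^4b_t^4\bigr)}{4a_t^4b_t^4c_t^4},
\end{equation*}
while $T_{77}^2/g_{77}^2=f_t^2(a_t^2b_t^2+a_t^2c_t^2+b_t^2c_t^2)^2/(4a_t^4b_t^4c_t^4)$ over the same common denominator; combining gives the numerator displayed in \eqref{eq:normTsquare_h7}.

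The whole argument is a purely mechanical diagonal computation, so there is no genuine obstacle; the only care needed is in combining the horizontal and vertical contributions in $|T_t|^2_t$ over a common denominator, and in tracking the sign of $T_{77}$ (which cancels when squared in $T^tT$ and $|T|^2$ but survives in the trace).
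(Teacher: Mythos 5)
Your proposal is correct and follows exactly the route the paper intends: the paper omits this proof but refers to the analogous diagonal computations in the contact Calabi--Yau case (e.g.\ the proof of the lemma giving \eqref{eq:TtranposeT}--\eqref{eq:traceT}), which are precisely the contractions $\sum_i T_{ii}^2 g^{ii}$, $\sum_i T_{ii}^2 (g^{ii})^2$ and $\sum_i T_{ii} g^{ii}$ you carry out. All of your diagonal entries and the resulting combinations over the common denominator check out against \eqref{eq:TtranposeT_h7}, \eqref{eq:normTsquare_h7} and \eqref{eq:traceT_h7}.
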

	
	For the family of $\G2$-structures defined in \eqref{eqn:phit_h7} it was shown in \cite{bff-coflow} $\Div_tT_t=0$ as it only depends on the derivative of $(\tau_0)_t$. Hence for this family the flow \eqref{eq:rhfeqn} becomes
	\begin{align*}
		& \dfrac{\pt \g2_t}{\pt t} = \left(-\Ric_t+3 T_t^tT_t -|T_t|_t^2g_t \right) \diamond \g2_t.
	\end{align*}
	
	\noindent
	Using \eqref{eqn:Tt_h7},\eqref{eq:Rict_h7},\eqref{eq:TtranposeT_h7}, and \eqref{eq:normTsquare_h7} one can easily compute the the $2$-tensor 
	\begin{align*}
		-\Ric_t+3 T_t^tT_t -|T_t|_t^2g_t&=  -\frac{f_t^2(3a_t^4(b_t^4+c_t^4)-2b_t^4c_t^4+2(a_tb_tc_t)^2(a_t^2+b_t^2+c_t^2))}{4a_t^2b_t^4c_t^4} \ ( (e^1)^2+(e^2)^2)\\
		& \qquad -\frac{f_t^2(3b_t^4(a_t^4+c_t^4)-2a_t^4c_t^4+2(a_tb_tc_t)^2(a_t^2+b_t^2+c_t^2))}{4a_t^4b_t^2c_t^4} \  ( (e^3)^2+(e^4)^2)\\
		&\qquad -\frac{f_t^2(3c_t^4(a_t^4+b_t^4)-2a_t^4b_t^4+2(a_tb_tc_t)^2(a_t^2+b_t^2+c_t^2))}{4a_t^4b_t^4c_t^2} \ ( (e^5)^2+(e^6)^2) \\
		&\qquad + \frac{f_t^4(2(a_tb_tc_t)^2(a_t^2+b_t^2+c_t^2)-a_t^4b_t^4+-a_t^4c_t^4-b_t^4c_t^4)}{2a_t^4b_t^4c_t^4} \ (e^7)^2.
	\end{align*}
	
	For the above tensor we can now compute 
	\begin{align*}
		i_{\g2_t}(-\Ric_t+3 T_t^tT_t -|T_t|_t^2g_t) &= -\frac{f_t^3(4a_t^4(b_t^4+c_t^4)-b_t^4c_t^4)}{2a_t^2b_t^4c_t^4}e^{127}-\frac{f_t^3(4b_t^4(a_t^4+c_t^4)-a_t^4c_t^4)}{2a_t^4b_t^2c_t^4}e^{347} \nonumber \\
		& \qquad -\frac{f_t^3(4c_t^4(a_t^4+b_t^4)-a_t^4b_t^4)}{2a_t^4b_t^4c_t^2}e^{567}\\
		& \qquad -\frac{f_t^2(3(a_tb_tc_t)^2(a_t^2+b_t^2+c_t^2)+2(a_t^4b_t^4+a_t^4c_t^4+b_t^4c_t^4))}{2a_t^3b_t^3c_t^3} \ (e^{135}-e^{146}-e^{236}-e^{245})
	\end{align*}
	
	The system of ODEs for the family \eqref{eqn:phit_h7} to be a solution for \eqref{eq:rhfeqn} is given by
	\begin{align*}
		\frac{d(f_ta_t^2)}{dt}&= -\frac{f_t^3(4a_t^4(b_t^4+c_t^4)-b_t^4c_t^4)}{2a_t^2b_t^4c_t^4},\\
		\frac{d(f_tb_t^2)}{dt}&=-\frac{f_t^3(4b_t^4(a_t^4+c_t^4)-a_t^4c_t^4)}{2a_t^4b_t^2c_t^4},\\
		\frac{d(f_tc_t^2)}{dt}&=-\frac{f_t^3(4c_t^4(a_t^4+b_t^4)-a_t^4b_t^4)}{2a_t^4b_t^4c_t^2},\\
		\frac{d(a_tb_tc_t)}{dt}&=-\frac{f_t^2(3(a_tb_tc_t)^2(a_t^2+b_t^2+c_t^2)+2(a_t^4b_t^4+a_t^4c_t^4+b_t^4c_t^4))}{2a_t^3b_t^3c_t^3}.
	\end{align*}
	The previous discussion can be summarized in the following, very general theorem on the Heisenberg group.
	
	\begin{theorem}\label{thm:RHFheisenberggeneral}
		Consider the $7$-dimensional Heisenberg group with Lie algebra and structure constants given in \eqref{eqn:diff_h7}. Then for the initial $\G2$-structure $\g2_0$ in \eqref{eq:3formheisenberg}, the family $\g2_t$ given in \eqref{eqn:phit_h7} is a solution to the Ricci-harmonic flow if and only if the functions $f_t,\ a_t,\ b_t,\ c_t$ with initial conditions $f(0)=a(0)=b(0)=c(0)=1$ satisfy the following system of ODE,
		\begin{align}\label{eq:heisenbergODERHF}
			\begin{split}
				\frac{d(f_ta_t^2)}{dt}&= -\frac{f_t^3(4a_t^4(b_t^4+c_t^4)-b_t^4c_t^4)}{2a_t^2b_t^4c_t^4},\\
				\frac{d(f_tb_t^2)}{dt}&=-\frac{f_t^3(4b_t^4(a_t^4+c_t^4)-a_t^4c_t^4)}{2a_t^4b_t^2c_t^4},\\
				\frac{d(f_tc_t^2)}{dt}&=-\frac{f_t^3(4c_t^4(a_t^4+b_t^4)-a_t^4b_t^4)}{2a_t^4b_t^4c_t^2},\\
				\frac{d(a_tb_tc_t)}{dt}&=-\frac{f_t^2(3(a_tb_tc_t)^2(a_t^2+b_t^2+c_t^2)+2(a_t^4b_t^4+a_t^4c_t^4+b_t^4c_t^4))}{2a_t^3b_t^3c_t^3}.
			\end{split} 
		\end{align}
	\end{theorem}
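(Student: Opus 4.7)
The proof is essentially a direct verification, since the Ricci-harmonic flow for this ansatz reduces to a purely algebraic equation between two explicit $3$-forms whose coefficients are functions of $t$. The plan is to match the time derivative of $\g2_t$ against the right-hand side of \eqref{eq:rhfeqn} term by term, using the tensors computed in the previous section.

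First, since \eqref{eqn:phit_h7} defines a co-closed family, the divergence of torsion vanishes (as observed in \cite{bff-coflow}, noting that $\Div_tT_t$ depends only on the derivative of the scalar torsion $(\tau_0)_t$, which is left-invariant on $H$ and hence has zero derivative with respect to the invariant frame). Therefore the flow equation reduces to
\begin{align*}
    \ptt\g2_t \;=\; \bigl(-\Ric_t + 3\,T_t^tT_t - |T_t|_t^2\,g_t\bigr)\diamond\g2_t.
\end{align*}
The left-hand side is read off from \eqref{eqn:phit_h7} as
\begin{align*}
    \ptt\g2_t = \frac{d(f_ta_t^2)}{dt}e^{127}+\frac{d(f_tb_t^2)}{dt}e^{347}+\frac{d(f_tc_t^2)}{dt}e^{567}+\frac{d(a_tb_tc_t)}{dt}\bigl(e^{135}-e^{146}-e^{236}-e^{245}\bigr).
\end{align*}

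Next I would compute the diamond action on the right-hand side. Using \eqref{eqn:Tt_h7}, \eqref{eq:Rict_h7}, \eqref{eq:TtranposeT_h7} and \eqref{eq:normTsquare_h7}, the $2$-tensor $-\Ric_t+3T_t^tT_t-|T_t|_t^2g_t$ is diagonal in the frame $\{e^i\}$, with the three pairs $\{e^1,e^2\}$, $\{e^3,e^4\}$, $\{e^5,e^6\}$ each having a common coefficient and the $e^7$-direction having its own. The $\diamond$ action is then computed by the definition \eqref{eq:diadefn}: for a diagonal symmetric $2$-tensor $A=\sum\lambda_i (e^i)^2$ and a decomposable $3$-form $e^{ijk}$, one has $A\diamond e^{ijk}=(\lambda_i+\lambda_j+\lambda_k)e^{ijk}$, where the $\lambda_i$ are taken with respect to the metric $g_t$ (i.e.~raised indices). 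Thus each basis $3$-form in \eqref{eqn:phit_h7} is an eigen-$3$-form of $\diamond$, and the coefficient on $e^{127}$, $e^{347}$, $e^{567}$ and the block $e^{135}-e^{146}-e^{236}-e^{245}$ can be read off directly; this is exactly the computation already displayed as $i_{\g2_t}(-\Ric_t+3T_t^tT_t-|T_t|_t^2g_t)$ just before the statement of the theorem.

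Finally, matching the coefficients of the four linearly independent $3$-forms $e^{127}$, $e^{347}$, $e^{567}$, and $e^{135}-e^{146}-e^{236}-e^{245}$ on both sides gives the system \eqref{eq:heisenbergODERHF}, and conversely if the four ODEs hold then the flow equation is satisfied identically. The initial conditions $f(0)=a(0)=b(0)=c(0)=1$ are forced by $\g2_t|_{t=0}=\g2_0$.

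The main obstacle is purely bookkeeping: one must carefully raise indices with respect to the non-flat metric $g_t$ (so that, for example, the coefficient of $(e^7)^2$ in a $2$-tensor becomes divided by $f_t^2$ when acting on $e^7$-containing $3$-forms), and verify that the rescaled $\diamond$-eigenvalues for $e^{127}$ and the companion $3$-form $e^{135}-e^{146}-e^{236}-e^{245}$ match the coefficients in the already-computed $i_{\g2_t}$-expression. Once this linear algebra is organized, the equivalence of the flow equation and the ODE system is immediate.
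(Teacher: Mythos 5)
Your proposal is correct and follows essentially the same route as the paper: the paper's ``proof'' is precisely the discussion preceding the theorem statement, namely reducing the flow to the $\diamond$-term via $\Div_tT_t=0$ (citing \cite{bff-coflow}), computing the diagonal $2$-tensor $-\Ric_t+3T_t^tT_t-|T_t|_t^2g_t$ from the previously established formulas, applying $i_{\g2_t}$ to it, and matching coefficients of the four invariant $3$-forms against $\ptt\g2_t$. Your observation that each basis $3$-form is an eigenvector of $A\diamond(\cdot)$ with eigenvalue given by the sum of the metric-raised diagonal entries is exactly the bookkeeping the paper carries out implicitly.
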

	The equation \eqref{eq:heisenbergODERHF} is very general and solutions to the ordinary differential equations will give explicit solutions to the Ricci-harmonic flow. 
	
	Now we specialize to the case when $a(t)=b(t)=c(t)=h(t)$, with $h(0)=1$. We can solve the ODEs in \eqref{eq:heisenbergODERHF} in this case and we get the following result which is (as expected) similar to \Cref{thm:RHFcCY} and so we omit the proof.
	
	\begin{theorem}\label{thm:RHFheisenberg}
		Let $(M^7, \g2_0)$ with $\g2_0$ in \eqref{eq:3formheisenberg} be the $7$-dimensional Heisenberg group. The Ricci-harmonic flow starting with $\g2_0$ is explicitly solved by the family of co-closed $\G2$-structures given by
		\begin{equation}\label{eq:rhfheisol}
			\begin{aligned}
				\varphi_t &= f_th_t^2(e^{127}+e^{347}+e^{567})+h_t^3(e^{135}-e^{146}-e^{236}-e^{245})
			\end{aligned}
		\end{equation}
		where $h_t = (1-13t)^{\frac{5}{26}},\ \ f_t=a(1-13t)^{-\frac{3}{26}}$. The solution $\g2_t$ is an ancient solution of the flow with $t\in \left(-\infty, \frac{1}{13}\right)$ and the flow has a finite time singularity at $t=\frac{1}{13}$. If $M$ is compact, then the solution $\g2_t$ is the unique solution starting with $\g2_0$ and the singularity is a Type I singularity.  
		\qed 
	\end{theorem}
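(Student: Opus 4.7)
The plan is to specialize the general ODE system \eqref{eq:heisenbergODERHF} from \Cref{thm:RHFheisenberggeneral} to the symmetric ansatz $a_t=b_t=c_t=h_t$, and to observe that it collapses to exactly the coupled ODE system \eqref{eq:rhfODE} that was solved in the contact Calabi--Yau case. Concretely, I would substitute $a_t=b_t=c_t=h_t$ into each of the four equations in \eqref{eq:heisenbergODERHF}; by the obvious symmetry of the ansatz, the first three equations become identical, so the system collapses from four equations in four unknowns to two equations in two unknowns.

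A short calculation in the first equation gives $4h^4(h^4+h^4)-h^8 = 7h^8$, so it reduces to $\frac{d(f_th_t^2)}{dt} = -\frac{7f_t^3}{2h_t^2}$. For the fourth equation, one has $3h^6\cdot 3h^2 + 2\cdot 3h^8 = 15h^8$, yielding $\frac{d(h_t^3)}{dt} = -\frac{15f_t^2}{2h_t}$. These are precisely the ODEs in \eqref{eq:rhfODE} for the contact Calabi--Yau ansatz with the parameter $a$ set equal to $1$ (matching the initial condition $f(0)=h(0)=1$).

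At this point I would invoke verbatim the separation-of-variables argument in the proof of \Cref{thm:RHFcCY}: divide to get $\frac{df}{f} = -\frac{3}{5}\frac{dh}{h}$, conclude $f = h^{-3/5}$ from the initial conditions, and then integrate $h' = -\frac{5}{2} h^{-21/5}$ to obtain $h_t=(1-13t)^{5/26}$ and $f_t=(1-13t)^{-3/26}$ (so the stray ``$a$'' in the statement is a typo). The solution is manifestly defined on $(-\infty, 1/13)$ and degenerates at $t=1/13$.

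For the compact case (i.e.\ passing to a compact nilmanifold quotient), uniqueness among solutions starting at $\g2_0$ follows from the short-time existence and uniqueness result \cite[Thm. 3.7]{dwivedi-rhf}. The Type~I classification of the singularity is obtained as in \Cref{thm:RHFcCY}: compute $|T_t|^2$, $|\Rm_t|^2$ and $|\nabla_tT_t|^2$ using \eqref{eqn:Tt_h7}, \eqref{eq:Rict_h7} and \eqref{eq:normTsquare_h7}, observe that each term is bounded by a constant multiple of $(1-13t)^{-2}$, and hence $\sup_{t\in[0,1/13)}(1/13-t)\Lambda(t)<\infty$, fulfilling \Cref{def:sing.types}\cref{itemTypeI}. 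The main (though modest) obstacle is just the bookkeeping in the first of the four ODEs; everything else is a direct transcription of the contact Calabi--Yau argument.
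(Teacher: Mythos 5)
Your proposal is correct and follows exactly the route the paper intends (the paper omits the proof, saying only that one specializes \eqref{eq:heisenbergODERHF} to $a_t=b_t=c_t=h_t$ and recovers the situation of \Cref{thm:RHFcCY}); your arithmetic reducing the four ODEs to \eqref{eq:rhfODE} with $a=1$ checks out, and your observation that the ``$a$'' in $f_t=a(1-13t)^{-3/26}$ is a typo (since $f(0)=1$ here) is also correct.
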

The behaviour of the quantity $\Lambda(t)$, the type of singularity of the solutions as well as the range for the uniform continuity of the evolving metrics follow in the same way as in the cCY case.

\medskip
	
We now derive the system of ODEs for the functions $f(t),\ a(t),\ b(t)$ and $c(t)$ in \eqref{eqn:phit_h7} which they need to satisfy in order to get solutions of the Ricci-like flows in \cite{panos-georgeG2Hilbert}.

For \eqref{eq:pgflw1}, since $(\tau_1)_t=0$ for the family of $\G2$-structures in \eqref{eqn:phit_h7}, the equation for \eqref{eq:pgflw1} becomes
	\begin{align*}
		\ptt \g2_t&=  -\Ric_t + \tr T_t(T_t).
	\end{align*}
	Using \eqref{eq:traceT_h7}, \eqref{eq:TtranposeT_h7}, and \eqref{eq:Rict_h7} we can compute that for $\g2_t$ in \eqref{eqn:phit_h7}
	\begin{align*}
		i_{\g2_t}( -\Ric_t + \tr T_t(T_t)) &=-\frac{f_t^3(3a_t^4(b_t^4+c_t^4)-3b_t^4c_t^4+2a_t^4b_t^2c_t^2)}{4a_t^2b_t^4c_t^4} e^{127}-\frac{f_t^3(3b_t^4(a_t^4+c_t^4)-3a_t^4c_t^4+2a_t^2b_t^4c_t^2)}{4a_t^4b_t^2c_t^4}e^{347} \nonumber \\
		& \qquad -\frac{f_t^3(3c_t^4(a_t^4+b_t^4)-3a_t^4b_t^4+2a_t^2b_t^2c_t^4)}{4a_t^4b_t^4c_t^2}e^{567}\\
		& \qquad +\frac{f_t^2(2(a_tb_tc_t)^2(a_t^2+b_t^2+c_t^2)+3(a_t^4b_t^4+a_t^4c_t^4+b_t^4c_t^4))}{4a_t^3b_t^3c_t^3} \ (e^{135}-e^{146}-e^{236}-e^{245}).
	\end{align*}
	
\noindent	
So the system of ODEs we get for the family \eqref{eqn:phit_h7} to be a solution for \eqref{eq:pgflw1} is given by
	\begin{align}\label{eq:pgflw1heigen}
\begin{split}		
		\frac{d(f_ta_t^2)}{dt}&= \frac{f_t^3(3a_t^4(b_t^4+c_t^4)-3b_t^4c_t^4+2a_t^4b_t^2c_t^2)}{4a_t^2b_t^4c_t^4},\\
		\frac{d(f_tb_t^2)}{dt}&=-\frac{f_t^3(3b_t^4(a_t^4+c_t^4)-3a_t^4c_t^4+2a_t^2b_t^4c_t^2)}{4a_t^4b_t^2c_t^4},\\
		\frac{d(f_tc_t^2)}{dt}&=-\frac{f_t^3(3c_t^4(a_t^4+b_t^4)-3a_t^4b_t^4+2a_t^2b_t^2c_t^4)}{4a_t^4b_t^4c_t^2},\\
		\frac{d(a_tb_tc_t)}{dt}&=\frac{f_t^2(2(a_tb_tc_t)^2(a_t^2+b_t^2+c_t^2)+3(a_t^4b_t^4+a_t^4c_t^4+b_t^4c_t^4))}{4a_t^3b_t^3c_t^3}.
	\end{split}	
	\end{align}

Similarly, for the flow defined by \eqref{eq:pgflw2} for the co-closed family of $\G2$-structures in \eqref{eqn:phit_h7}, the equation for \eqref{eq:pgflw2} becomes
	\begin{align*}
		\ptt \g2_t&=  -\Ric_t + \tr T_t(T_t)+\frac{|T_t|^2}{3}g_t.
	\end{align*}
Again we can use \eqref{eq:traceT_h7}, \eqref{eq:TtranposeT_h7}, \eqref{eq:normTsquare_h7}, and \eqref{eq:Rict_h7} to compute for $\g2_t$ in \eqref{eqn:phit_h7}
\begin{align*}
		i_{\g2_t}( -\Ric_t + \tr T_t(T_t)++\frac{|T_t|^2}{3}g_t) &=\frac{f_t^3(a_t^2(b_t^2+c_t^2)+3b_t^2c_t^2)}{2a_t^2b_t^2c_t^2} e^{127}+\frac{f_t^3(b_t^2(a_t^2+c_t^2)+3a_t^2c_t^2)}{2a_t^2b_t^2c_t^2} e^{347} \nonumber \\
		& \quad +\frac{f_t^3(c_t^2(a_t^2+b_t^2)+3a_t^2b_t^2)}{2a_t^2b_t^2c_t^2}e^{567}\\
		& \quad +\frac{f_t^2(2(a_tb_tc_t)^2(a_t^2+b_t^2+c_t^2)+3(a_t^4b_t^4+a_t^4c_t^4+b_t^4c_t^4))}{2a_t^3b_t^3c_t^3} \ (e^{135}-e^{146}-e^{236}-e^{245}).
\end{align*}
So the system of ODEs we get if the family \eqref{eqn:phit_h7} is a solution for \eqref{eq:pgflw2} is given by
	\begin{align}\label{eq:pgflw2heigen}
	\begin{split}	
		\frac{d(f_ta_t^2)}{dt}&= -\frac{f_t^3(a_t^2(b_t^2+c_t^2)+3b_t^2c_t^2)}{2a_t^2b_t^2c_t^2} ,\\
		\frac{d(f_tb_t^2)}{dt}&=\frac{f_t^3(b_t^2(a_t^2+c_t^2)+3a_t^2c_t^2)}{2a_t^2b_t^2c_t^2},\\
		\frac{d(f_tc_t^2)}{dt}&= \frac{f_t^3(c_t^2(a_t^2+b_t^2)+3a_t^2b_t^2)}{2a_t^2b_t^2c_t^2},\\
		\frac{d(a_tb_tc_t)}{dt}&=\frac{f_t^2(2(a_tb_tc_t)^2(a_t^2+b_t^2+c_t^2)+3(a_t^4b_t^4+a_t^4c_t^4+b_t^4c_t^4))}{2a_t^3b_t^3c_t^3} .
\end{split}		
	\end{align}
	
Just as in \Cref{thm:RHFheisenberg}, we can specialize to the case when $a(t)=b(t)=c(t)=h(t)$ and $h(0)=1$, $f(0)=1$ in both \eqref{eq:pgflw1heigen} and \eqref{eq:pgflw2heigen} and get the solutions as in \Cref{thm:pgflowcCY}. These will again provide immortal solutions of the flow.

\medskip

Finally, we look at the negative gradient flow \eqref{eq:neggradflow} on the Heisenberg group. To this end, we derive the general system of ODEs for the functions $f(t),\ a(t),\ b(t)$ and $c(t)$ in \eqref{eqn:phit_h7} so that the resulting solution will give an explicit solution to the negative gradient flow in \eqref{eq:neggradflow}.

\medskip

Using \eqref{eq:Rict_h7}, \eqref{eq:TtranposeT_h7} and \eqref{eq:traceT_h7} we can compute that for $\g2_t$ in \eqref{eqn:phit_h7},

\begin{align*}
	i_{\g2_t}\left( -\Ric_t + \tr T_t(T_t)-\frac 12|T_t|^2_{g_t}\right) &=-\frac{f_t^3(11a_t^4c_t^4+2a_t^2b_t^2c_t^4-b_t^4c_t^4+2a_t^4b_t^2c_t^2+2a_t^2b_t^4c_t^2+11a_t^4b_t^4)}{8a_t^2b_t^4c_t^4}  e^{127}\\
	& \quad +\frac{f_t^3(a_t^4c_t^4-2a_t^2b_t^2c_t^4-11b_t^4c_t^4-2a_t^4b_t^2c_t^2-2a_t^2b_t^4c_t^2-11a_t^4b_t^4)}{8a_t^4b_t^2c_t^4}e^{347} \nonumber \\
	& \quad -\frac{f_t^3(11a_t^4c_t^4+2a_t^2b_t^2c_t^4+11b_t^4c_t^2+2a_t^4b_t^2c_t^2+2a_t^2b_t^4c_t^2-a_t^4b_t^4)}{8a_t^4b_t^4c_t^2}e^{567}\\
	& \quad -\frac{3f_t^2(a_t^4c_t^4+2a_t^2b_t^2c_t^4+b_t^4c_t^4+2a_t^4b_t^2c_t^2+2a_t^2b_t^4c_t^2+a_t^4b_t^4)}{aa_t^3b_t^3c_t^3} \ (e^{135}-e^{146}-e^{236}-e^{245}).
\end{align*}

\noindent	
So the system of ODEs we get for the family \eqref{eqn:phit_h7} to be a solution for \eqref{eq:neggradflow} is given by
\begin{align}\label{eq:ngflw1heigen}
	\begin{split}		
		\frac{d(f_ta_t^2)}{dt}&= -\frac{f_t^3(11a_t^4c_t^4+2a_t^2b_t^2c_t^4-b_t^4c_t^4+2a_t^4b_t^2c_t^2+2a_t^2b_t^4c_t^2+11a_t^4b_t^4)}{8a_t^2b_t^4c_t^4},\\
		\frac{d(f_tb_t^2)}{dt}&=\frac{f_t^3(a_t^4c_t^4-2a_t^2b_t^2c_t^4-11b_t^4c_t^4-2a_t^4b_t^2c_t^2-2a_t^2b_t^4c_t^2-11a_t^4b_t^4)}{8a_t^4b_t^2c_t^4},\\
		\frac{d(f_tc_t^2)}{dt}&=-\frac{f_t^3(11a_t^4c_t^4+2a_t^2b_t^2c_t^4+11b_t^4c_t^2+2a_t^4b_t^2c_t^2+2a_t^2b_t^4c_t^2-a_t^4b_t^4)}{8a_t^4b_t^4c_t^2},\\
		\frac{d(a_tb_tc_t)}{dt}&=-\frac{3f_t^2(a_t^4c_t^4+2a_t^2b_t^2c_t^4+b_t^4c_t^4+2a_t^4b_t^2c_t^2+2a_t^2b_t^4c_t^2+a_t^4b_t^4)}{8a_t^3b_t^3c_t^3}.
	\end{split}	
\end{align}

The system of ODEs in \eqref{eq:ngflw1heigen} is very general and specializing to special values of $f(t),\ a(t),\ b(t)$ and $c(t)$ will provide explicit solutions for the negative gradient flow. If $a(t)=b(t)=c(t)=h(t)$ with $h(0)=1$ and if $f(0)=1$, then we can solve the ODEs in \eqref{eq:ngflw1heigen} to get solutions as in \Cref{thm:ngflowcCY} which will provide ancient solutions of the negative gradient flow on the Heisenberg group.

\printbibliography
	
	\noindent
	(SD): Fachbereich Mathematik, Universität Hamburg, Bundesstraße 55, 20146 Hamburg, Germany.\\
	\href{mailto:shubham.dwivedi@uni-hamburg.de}{shubham.dwivedi@uni-hamburg.de}\\
	
	\noindent
	(RS): University of Münster, Einsteinstrasse 62, 48149 Münster, Germany. \\
	\href{ rsinghal@uni-muenster.de}{ rsinghal@uni-muenster.de}.

\end{document}